\DeclareMathOperator*{\argmin}{arg\,min}
\begin{document}
\bibliographystyle{abbrv}

\title{Non-parametric change-point detection using string matching algorithms}
\author{Oliver Johnson\thanks{School of Mathematics, University of Bristol,
University Walk, Bristol, BS8 1TW, UK} \thanks{Corresponding author. Email
{\tt maotj@bristol.ac.uk}} \and Dino Sejdinovic$^{*}$
  \and James Cruise\thanks{The Department of Actuarial Mathematics and Statistics,
and the Maxwell Institute for Mathematical Sciences, Heriot-Watt University
Edinburgh Campus, Edinburgh, Scotland, EH14 4AS.}
\and Ayalvadi Ganesh$^{*}$ \and Robert Piechocki\thanks{Centre for Communications Research,
University of Bristol, Merchant Venturers Building,
Woodland Road, Bristol
BS8 1UB, UK}  }
\date{\today}
\maketitle

\newtheorem{theorem}{Theorem}[section]
\newtheorem{lemma}[theorem]{Lemma}
\newtheorem{proposition}[theorem]{Proposition}
\newtheorem{corollary}[theorem]{Corollary}
\newtheorem{conjecture}[theorem]{Conjecture}
\newtheorem{definition}[theorem]{Definition}
\newtheorem{example}[theorem]{Example}
\newtheorem{condition}{Condition}
\newtheorem{main}{Theorem}
\newtheorem{remark}[theorem]{Remark}
\hfuzz30pt

\newcommand{\var}{{\rm{Var\;}}}
\newcommand{\cov}{{\rm{Cov\;}}}
\newcommand{\tends}{\rightarrow \infty}
\newcommand{\tz}{\rightarrow 0}
\newcommand{\ep}{{\mathbb {E}}}
\newcommand{\pr}{{\mathbb {P}}}
\newcommand{\co}{{\mathbb {C}}}
\newcommand{\re}{{\mathbb {R}}}
\newcommand{\zz}{{\mathcal{Z}}}
\newcommand{\vc}[1]{\mathbf{#1}}

\newcommand{\I}{\mathbb {I}}
\newcommand{\blah}[1]{}
\newcommand{\TT}{{\mathcal{T}}}
\newcommand{\SSC}{{\mathcal{S}}}
\newcommand{\alphb}{{\mathcal{A}}}
\newcommand{\diy}{\displaystyle}
\newcommand{\elly}[2]{\ell^{(#1)}_{#2}}

\newcommand{\clr}[1]{C_{LR}(#1)}
\newcommand{\clra}[2]{C_{LR}^{(#2)}(#1)}
\newcommand{\crl}[1]{C_{RL}(#1)}
\newcommand{\crla}[2]{C_{RL}^{(#2)}(#1)}

\newcommand{\muc}[2]{\mu^{(#2)}_{#1}}
\newcommand{\bino}[2]{{\rm Bin} \left( #1, #2 \right)}
\newcommand{\pois}[1]{{\rm Po}\left(#1 \right)}
\newcommand{\bern}[1]{{\rm Bern} \left(#1 \right)}
\newcommand{\ZZZ}[2]{Z_{#1}(#2)}
\newcommand{\wgn}{\widehat{\gamma}}
\newcommand{\convd}{\stackrel{{\mathcal{D}}}{\longrightarrow}}
\newcommand{\thinning}[1]{\left( #1 \right) \circ}
\newcommand{\mean}[2]{d_{#1,#2}}
\newcommand{\meanone}[1]{d_{#1}}
\newcommand{\meanmin}[1]{d_{#1}^{\min}}

\newcommand{\NN}[1]{N_{\vc{#1}}}
\newcommand{\ONN}[1]{\overline{N}_{\vc{#1}}}
\newcommand{\clrloc}[2]{C_{LR,\vc{#2}}(#1)}
\newcommand{\plrloc}[2]{\psi_{LR,\vc{#2}}(#1)}
\newcommand{\grma}{Graph Model A}
\newcommand{\grmb}{Graph Model B}

\begin{abstract}
Given the output of a data source taking values in a finite alphabet,
we wish to detect change-points, that is times
when the statistical properties of the source
change. Motivated by 
ideas of match lengths in information theory, 
we introduce a novel non-parametric estimator which we call
CRECHE  (CRossings Enumeration CHange Estimator). 
We present simulation evidence that this estimator performs well, both 
for simulated sources and for
real data formed by concatenating text sources.
For example, we show that we can accurately detect the point
at which a source changes from a Markov chain to an IID source
with the same stationary distribution. Our estimator requires 
no assumptions about the form of the source distribution,
and avoids the need to estimate its probabilities.
Further, we establish consistency of the CRECHE estimator 
under a related toy model,
by establishing a fluid limit and using martingale arguments.
\end{abstract}

\newpage
\section{Introduction and notation}

Suppose we are
given the output of a data source, in the form of a string $x$ of
 $n$ symbols drawn from a
finite alphabet $\alphb$, but have no knowledge of the source's statistical
properties.
It is a well-studied problem to consider 
 whether the source is stationary  or, if it is
piecewise stationary, to estimate the change-points --
that is, positions at which  the source model changes. 
In Section \ref{sec:cplit}, we review existing approaches to the change-point
detection problem and describe some applications.

This paper offers a new universal non-parametric
perspective, motivated by ideas from information theory. 
Specifically, 
a substantial existing literature considers 
so-called `match lengths'. That is, as described in
Definition \ref{def:matchlength}, for
each point $i$ we can define the match length $L_i^n$ to be the length of
the shortest substring starting at $i$ which does not occur elsewhere in 
the string. For a wide class of processes, consistent
entropy estimators can be constructed from the match lengths,
as described in Section \ref{sec:entropy}, see for example  \cite[Theorem 1]{shields}. 

Our approach is motivated by the idea of considering match positions $T_i^n$,
 chosen uniformly at random from the places where a substring
of maximal match length occurs. We consider creating a directed
graph where position $i$ is linked to $T_i^n$ defined in this way.
We refer to this as \grma --
 see Definition \ref{def:matchpos}
for a formal definition.

 Heuristically, in a model with no change-points
we believe that the $T_i^n$ will be approximately uniformly distributed,
and in a model with change-points the $T_i^n$ will tend to lie in the same
region as $i$. 
We therefore define the crossings functions $\clr{j}$
and $\crl{j}$ as follows:
\begin{definition} \label{def:crossings}
For any directed graph formed by linking $i$ to $T_i^n$, 
given a putative change-point $0 \leq j \leq n-1$ 
we write
\begin{eqnarray} \clr{j} & = & \# \{ k: k < j \leq T_k^n \} 
\mbox{ for the number of left--right crossings
of $j$,} \\
\crl{j} & = & \# \{ k: T_k^n < j \leq k \} 
\mbox{ for the number of right--left crossings of $j$.}
\end{eqnarray}
\end{definition}
In 
a model with a single change-point at $n \gamma$, we look to estimate $\gamma$.
We use normalized versions of $\clr{j}$ and $\crl{j}$ to define an estimator
$\wgn$ of the change ratio.
\begin{definition} \label{def:crosscount}
For any sequence of $T_i^n$, using the definitions of $\clr{j}$
and $\crl{j}$ from Definition \ref{def:crossings},
define the normalized crossing processes
\begin{equation}
 \psi_{LR}(j) = \frac{\clr{j}}{n-j} - \frac{j}{n} \mbox{\;\;\; and \;\;\;}
\psi_{RL}(j) = \frac{\crl{j}}{j} - \frac{n-j}{n},\end{equation}
the maximum function
\begin{equation}
 \psi(j) = \max \left( \psi_{LR}(j), \psi_{RL}(j) \right) \end{equation}
and estimate the change-point using the 
CRECHE (CRossings Enumeration CHange Estimator) as 
\begin{equation} \wgn = \frac{1}{n} \argmin_{0 \leq j \leq n-1} 
\psi(j). \end{equation}
\end{definition}
The process $\psi_{LR}(j)$ has been designed via subtracting off
the mean of $\clr{j}$ (in a model with no change point), and 
is related to the conductance of the directed graph.

In Section \ref{sec:toy}
we prove that CRECHE $\wgn$ is $\sqrt{n}$-consistent in a related toy
model, which heuristically captures the key features of the piecewise
stationary model. We consider sampling $T_i^n$ from certain
mixtures of uniform distributions (\grmb)
and prove the following theorem:
\begin{theorem} \label{thm:main1}
For random variables $T_i^n$ generated according to \grmb\; (see
Definition \ref{def:changervs}), the estimator $\wgn$ of Definition
\ref{def:crosscount}
is $\sqrt{n}$-consistent. That is,
there exists a constant $K$, depending on $\alpha_L$, $\alpha_R$
and $\gamma$, such that for all $s$:
\begin{equation}
 \pr \left( \left| \wgn - \gamma \right| \geq \frac{s}{\sqrt{n}} 
\right) \leq \frac{K}{s^2}. \end{equation}
\end{theorem}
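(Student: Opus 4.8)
The plan is a classical argmin-consistency argument: identify the deterministic (``fluid'') profile of the crossing statistics, show it has a \emph{sharp} minimum at $\gamma$ (a kink, not a quadratic), and control the fluctuations about it by a martingale maximal inequality. The kink is precisely what yields the $\sqrt n$ rate rather than a slower one.

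\textbf{Step 1: the fluid limit.} Under \grmb{} the $T_i^n$ are independent, each drawn from a mixture of at most two uniform laws (on the relevant block, and on the whole string), so $\clr{j}=\sum_{k<j}\I[T_k^n\ge j]$ and $\crl{j}=\sum_{k\ge j}\I[T_k^n<j]$ are sums of independent Bernoulli variables with explicit success probabilities. I would compute $\ep\psi_{LR}(j)$ and $\ep\psi_{RL}(j)$ in closed form, separately for $j\le n\gamma$ and $j\ge n\gamma$, and check that in the scaling $x=j/n$ they converge to piecewise-smooth profiles $\bar\psi_{LR}(x),\bar\psi_{RL}(x)$, each strictly decreasing on $[0,\gamma]$ and strictly increasing on $[\gamma,1]$, with one-sided derivatives bounded away from $0$ at $\gamma$. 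Hence $g(x):=\max(\bar\psi_{LR}(x),\bar\psi_{RL}(x))$ is continuous with unique minimiser $\gamma$, $g(\gamma)<0$, and there are constants $c,c'>0$ (depending only on $\alpha_L,\alpha_R,\gamma$) giving the discrete sharp-minimum bound $G(j)-G(j^*)\ge c\,|j/n-\gamma|-c'/n$, where $G(j):=\max(\ep\psi_{LR}(j),\ep\psi_{RL}(j))$ and $j^*:=\argmin_j G(j)$ (which is within $O(1)$ of $n\gamma$); by Jensen, $\ep\psi(j)\ge G(j)$.

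\textbf{Step 2: fluctuations --- the main obstacle.} Set $W_j:=\psi(j)-\ep\psi(j)$. Since $\clr{j},\crl{j}$ are sums of at most $n$ independent indicators, on any interval $[\delta n,(1-\delta)n]$ (where $1/(n-j),1/j$ are of order $1/n$) one has $\var\psi_{LR}(j),\var\psi_{RL}(j)=O(1/n)$ and, via $|\max(a,b)-\max(a',b')|\le|a-a'|+|b-b'|$, the Jensen gap $\ep\psi(j)-G(j)=O(1/\sqrt n)$ there. The crux is to lift this to a \emph{maximal} bound $\ep[\sup_{\delta n\le j\le(1-\delta)n}W_j^2]=O(1/n)$, \emph{without} the spurious $\log n$ that a union bound over the $\Theta(n)$ indices would cost (and which would survive in the final estimate). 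Here the martingale argument enters: I would choose a filtration --- revealing the $T_k^n$, or the in-degree increments $\#\{k:T_k^n=p\}$, in an order adapted to $j$ --- under which $j\mapsto\clr{j}-\ep\clr{j}$ and $j\mapsto\crl{j}-\ep\crl{j}$ become (sub)martingales, apply Doob's $L^2$ maximal inequality, and divide by $(n-j)^2$, $j^2$; equivalently one may use a Dvoretzky--Kiefer--Wolfowitz-type bound, since $\clr{j}$ and $\crl{j}$ are functionals of the empirical distribution of the $T_k^n$. Arranging the filtration so that the conditional increments genuinely have mean zero, despite distinct $\clr{j}$'s depending on the same $T_k^n$'s, is the delicate point I expect to be most of the work.

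\textbf{Step 3: assembling the bound.} Choose $\delta>0$ with $g(\gamma)<-\delta<0$ and $\delta<\min(\gamma,1-\gamma)$, and write $\widehat\jmath:=n\wgn$. For $j$ outside $[\delta n,(1-\delta)n]$, one of the two branches gives $\psi(j)\ge-\delta$ deterministically ($\psi_{LR}(j)\ge-j/n$ when $j\le\delta n$, $\psi_{RL}(j)\ge-(n-j)/n$ when $j\ge(1-\delta)n$), whereas $\ep\psi(j^*)\le G(j^*)+O(1/\sqrt n)\to g(\gamma)<-\delta$; so $\{\wgn\notin[\delta,1-\delta]\}$ forces $W_{j^*}=\psi(j^*)-\ep\psi(j^*)\ge\psi(\widehat\jmath)-\ep\psi(j^*)$ to exceed a fixed positive constant, an event of probability $O(1/n)$ by Chebyshev. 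On the complement, $\widehat\jmath$ lies in the interior, and the defining property of $\wgn$ with Step 1 gives $\psi(j^*)\ge\psi(\widehat\jmath)\ge G(j^*)+c|\wgn-\gamma|-c'/n+W_{\widehat\jmath}$, while $\psi(j^*)\le G(j^*)+O(1/\sqrt n)+W_{j^*}$; subtracting and bounding $|W_{\widehat\jmath}|,|W_{j^*}|$ by $\sup_{\delta n\le j\le(1-\delta)n}|W_j|$ yields $c\,|\wgn-\gamma|\le 2\sup_{\delta n\le j\le(1-\delta)n}|W_j|+O(1/\sqrt n)$. Hence, for $s$ above an absolute constant (so the $O(1/\sqrt n)$ term is absorbed),
\begin{align*}
 \pr\!\left(|\wgn-\gamma|\ge\tfrac{s}{\sqrt n}\right)
 &\ \le\ O\!\left(\tfrac1n\right)+\pr\!\left(\sup_{\delta n\le j\le(1-\delta)n}|W_j|\ge\tfrac{cs}{4\sqrt n}\right)\\
 &\ \le\ O\!\left(\tfrac1n\right)+\frac{16\,n\,\ep[\sup_j W_j^2]}{c^2 s^2},
\end{align*}
which by Step 2 is $O(1/s^2)$ (using $1/n\le1/s^2$ for $s\le\sqrt n$, the probability being $0$ once $s/\sqrt n>1$); enlarging $K$ to also dominate $s^{-2}$ on the remaining bounded range of $s$ completes the proof, with $K$ depending only on $\alpha_L,\alpha_R,\gamma$.
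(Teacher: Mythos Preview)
Your overall strategy matches the paper's: identify the piecewise--smooth mean profile with a kink at $\gamma$, exclude the endpoints deterministically via $\psi_{LR}(j)\ge -j/n$ and $\psi_{RL}(j)\ge -(n-j)/n$, and control the fluctuations uniformly by Doob's $L^2$ maximal inequality. Two points are worth flagging, one a genuine technical gap and one a simplification you miss.

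\emph{The martingale.} Your Step~2 proposes to show that $j\mapsto \clr{j}-\ep\clr{j}$ is a (sub)martingale, apply Doob, and \emph{then} divide by $(n-j)^2$. That order of operations does not give what you need: Doob controls $\sup_j|\clr{j}-\ep\clr{j}|$ by the terminal variance, but the quantity you must bound is $\sup_j|\clr{j}-\ep\clr{j}|/(n-j)$, where the denominator shrinks as $j$ grows. The paper's key technical step (its Lemma on the thinned birth--death process and the subsequent Proposition) is that the \emph{correctly normalised} process is itself a martingale: conditionally on $\clr{j}=m$, each of the $m$ surviving links crosses $j+1$ independently with probability $1-1/(n\delta_L-j)$ (for $j<n\gamma$), plus an independent Bernoulli arrival, so $\clr{j}$ is an $\mathrm{INAR}(1)$ process and $\clr{j}/(n\delta_L-j)$ minus a linear drift is an exact martingale in $j$. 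Past the change-point the normalisation switches to $1/(n-j)$ and $\clr{j}$ splits into two independent pieces, each handled the same way. This is precisely the ``delicate point'' you anticipate, and without it the maximal inequality does not close.

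\emph{Avoiding the max.} The paper never works with $\psi=\max(\psi_{LR},\psi_{RL})$ in the fluctuation bounds. Instead it uses $\psi(j)\ge\psi_{LR}(j)$ to reduce the events $\{\min_{|j-n\gamma|\ge s\sqrt n}\psi(j)\le \meanmin{LR}+\epsilon\}$ to the corresponding events for $\psi_{LR}$ alone, and symmetrically bounds $\pr(\psi(n\gamma)>\meanmin{LR}+\epsilon)$ by the two marginal tails. This sidesteps your Jensen--gap bookkeeping entirely and keeps the argument inside the two explicit martingales $\ZZZ{LR}{\cdot}$ and $\ZZZ{RL}{\cdot}$. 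The final assembly is then a three--term decomposition (value at $n\gamma$ too large; minimum to the left too small; minimum to the right too small), with $\epsilon$ chosen proportional to $s/\sqrt n$ rather than fixed, which is what produces the clean $K/s^2$ bound.
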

\begin{proof} See Appendix 
\ref{appdx:sec:fullproof}. \end{proof}

In Section \ref{sec:simulate},
we present simulation evidence that this estimator $\wgn$, applied
to \grma, performs
well in situations where the source is piecewise stationary.  
As Figure \ref{fig:markov} shows, our algorithm can even distinguish
between the output of a first order Markov chain with stationary distribution
$\mu$ and an IID process with the same distribution. Since most non-parametric
methods are based on monitoring means or densities of symbols (see
Section \ref{sec:cplit}), this illustrates a major advantage of our techniques,
since we can efficiently partition texts that a density-based method would
find indistinguishable. We hope that we could even distinguish higher
order Markov sources,  in a situation where crude bigram or trigram
counts would similarly fail (or require prohibitive amounts of data).

Our method even 
appears to give good results in
situations with a change-point between non-stationary sources --
as illustrated in Figures \ref{fig:gereng} and \ref{fig:engeng} by
 examples based on written language. This robustness to changes
in the source model should not be a surprise
since the theory of match
lengths described in Section \ref{sec:entropy} holds for a range
of independent, Markov and mixing sources.

Further, we compare the two cases where $T_i^n$ are defined according to 
\grma, as in
Definition \ref{def:matchpos}, and \grmb, as in Definition
\ref{def:changervs}. 
We 
present simulation evidence that in these two cases the functions $\psi_{LR}$
and $\psi_{RL}$ have similar behaviour, and hence the estimator $\wgn$ performs similarly
for \grma\; and \grmb.

\section{Change-point literature review} \label{sec:cplit}

The problem of detecting change-points is an important and 
well-studied one, with applications in a range of fields
listed in the book by Poor
and Hadjiliadis \cite[P1]{poor}.  For example, we mention 
bioinformatics \cite{braun},
finance \cite{aggarwal}, sensor networks \cite{nguyen}, 
climate \cite{barnett}, analysis of writing style \cite{brodsky,giron,riba}
computer security \cite{kim2} and medicine \cite{frisen}.
Our approach currently works in the case of finite alphabet sources,
and is thus naturally suited to applications in bioinformatics, computer
network intrusion detection
and analysis of writing style.

As reviewed for example in \cite{killick}, many approaches
to the change-point detection exist within a parametric
framework. The general approach is to maximise the log-likelihood, with a
penalty term that ensures the number of changes is not too large.
For example, the binary segmentation algorithm of
Scott and Knott \cite{scott} aims to detect changes in mean of
normal samples, an approach extended in work of Horv\'{a}th
\cite{horvath}
to detection of changes of mean and variance.
In general, as in \cite{killick},
it is possible to model many situations 
parametrically by supposing that between change-points, the
data is IID from a model with fixed parameter $\theta_i$,
where the parameter $\theta_i$ is itself sampled from some
prior distribution.
This parametric problem has the simplifying feature that versions of
the  likelihood ratio test can be performed, and the work 
\cite{killick} concentrates on detection of multiple change-points
in as computationally efficient a manner as possible.

In contrast non-parametric methods, required when the laws of the
random variables are not available, are less widely studied. The
book by Brodsky and Darkhovsky \cite{brodsky}
describes many such approaches, often based on detecting changes
in the mean. Other non-parametric techniques include those
based on ranks and order statistics \cite{bell}, \cite{gordon},
kernel-based methods \cite{nguyen} and approaches based on
comparing empirical distribution functions before and after
a putative change-point \cite{carlstein}, \cite{dumbgen}, 
\cite{benhariz}. The paper 
\cite{goldenshluger} extends this to consider the situation where
the source is only observed indirectly or in the presence of noise.

In particular, Ben Hariz, Wylie and Zhang \cite{benhariz} build 
on \cite{dumbgen} to produce non-parametric estimators which offer
optimal $n$-consistency (error in $\wgn$ of  $O_{\pr}(1/n)$)
under natural assumptions. However,
this approach is built on detecting changes in empirical distributions,
and so requires the stationary distributions either side of the
change-point to be different. In contrast, see Figure \ref{fig:markov},
our estimator can work well even in the case where the stationary
distributions are the same.

One further distinction to be drawn is whether
the change-point is to be detected offline
through a detailed analysis
of the data sequence, or in real-time with streaming data.
Results in the second (quickest
detection) problem are extensively reviewed in the book by Poor
and Hadjiliadis \cite{poor}.  A range of
objective and penalty functions can be considered, giving 
rise to Shiryaev's problem, Lorden's problem and others.
In essence, \cite{poor}
shows that many such problems 
 can be analysed using optimal stopping theory, and algorithms 
based on versions of Page's CUSUM test can be shown to be optimal,
as in the work of Pollak \cite{pollak} and others.
The current paper considers offline detection, but in future
work we will describe an adaptation of our match position 
approach to the quickest detection problem, using match lengths as a
proxy for log-likelihoods. 

Our approach to the problem of detection of a
change of author or language, as illustrated in Section
\ref{sec:simulate}, should be contrasted with the approach of
Gir\'{o}n, Ginebra and Riba \cite{giron,riba}. These authors
choose particular features, such as distributions
of word lengths or local frequencies of known popular words, and apply
standard change-point analysis to the resulting counts. 
A similar analysis of the homogeneity of texts is reviewed in
\cite[P169--178]{brodsky}. In
contrast, our universal approach  takes into account
all features, by finding long repeated word patterns, and 
detecting variations from uniformity in their appearance.

\section{Match lengths and entropy estimation} \label{sec:entropy}

We use calculations based on match lengths as defined by Grassberger \cite{grassberger}
and adopt the notation of Shields \cite{shields3}.
That is, we consider a string $x$ taking values in a finite alphabet $\alphb$,
which we may take to be $\{1, \ldots, |\alphb | \}$ for simplicity.
We write $x_{m}^n = (x_m, \ldots, x_n)$
for a finite subsequence.

\begin{definition} \label{def:matchlength}
For a given string $x$,
define the match length at $i$ as
\begin{equation}
 L_i^n = L_i^n(x) = 
\min \left\{ L: x_i^{i+L-1} \neq x_j^{j+L-1} \mbox{ for all } 1 \leq j \leq n, j \neq i 
\right\}. \end{equation}
\end{definition}

For a wide range of sources, it has been proved that these match lengths
can be used to consistently estimate the entropy
of data source $X$. Grassberger 
\cite{grassberger} introduced $L_i^n$, and explained heuristically
why the following result should be true:
\begin{theorem}[Shields] \label{thm:shields}
If match lengths $L_i^n$ are calculated for an  IID or mixing Markov source 
$X$ with entropy $H$,
\begin{equation} \label{eq:shieldsconv}
 \lim_{n \rightarrow \infty} \frac{\sum_{i=1}^n
 L_i^n(X)}{n \log n} = \frac{1}{H},\end{equation}
almost surely.
\end{theorem}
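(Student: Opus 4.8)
\emph{Proof proposal.}

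The plan is to prove the two one-sided bounds $\limsup_n \frac1{n\log n}\sum_{i=1}^n L_i^n \le \frac1H$ and $\liminf_n \frac1{n\log n}\sum_{i=1}^n L_i^n \ge \frac1H$ almost surely, each by an AEP argument. Throughout write $p_\ell(w)=\pr(X_1^\ell=w)$ for a block $w\in\alphb^\ell$, and recall three standard facts for an IID or mixing Markov source with entropy rate $H$, taken non-degenerate (so $0<H$, and the chain is irreducible and aperiodic in the Markov case). First, the Shannon--McMillan--Breiman theorem: $-\frac1\ell\log p_\ell(X_1^\ell)\to H$ a.s.\ and in $L^1$. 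Second, more quantitatively, a large-deviation estimate --- Cram\'er's theorem in the IID case, where $-\log p_\ell(X_1^\ell)=\sum_{k=1}^\ell\log\frac1{p_{X_k}}$ is a sum of IID variables with mean $H$, or the Markov-chain LDP in the mixing case --- giving, for each $\eta>0$, an exponential bound $\pr\bigl(|-\tfrac1\ell\log p_\ell(X_1^\ell)-H|\ge\eta\bigr)\le e^{-\ell J(\eta)}$ with $J(\eta)>0$. Third, $L_i^n$ is non-decreasing in $n$ (enlarging the string only adds potential matches), which makes interpolation between subsequence values trivial.

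\emph{Upper bound.} Fix $\delta\in(0,1)$, put $\ell^+=\lceil(1+\delta)\log n/H\rceil$, and write $\sum_{i=1}^nL_i^n=\sum_{\ell\ge1}\#\{i\le n:L_i^n\ge\ell\}$. The terms with $\ell\le\ell^+$ contribute at most $n\ell^+\le(1+\delta)\frac{n\log n}H+n$. For $\ell>\ell^+$, note that $L_i^n\ge\ell$ forces the block $X_i^{i+\ell-1}$ to reoccur at some $j\ne i$ in $X_1^n$; conditioning on whether that block is ``heavy'' (probability at least $e^{-\ell H(1-\delta/2)}$) or not gives, by a first-moment (collision) bound, $\pr(L_i^n\ge\ell)\le n\,e^{-\ell H(1-\delta/2)}+g(\ell)+\pr\bigl(p_\ell(X_1^\ell)\ge e^{-\ell H(1-\delta/2)}\bigr)$: the first term bounds matches with the $\le n$ disjoint translates, bounding the coincidence probability of two disjoint translates (via independence, or $\psi$-mixing) against the total mass of non-heavy blocks; $g(\ell)$ is an exponentially small correction for the $O(\ell)$ self-overlapping translates, which force $X_i^{i+\ell-1}$ to be periodic, an exponentially rare event in a non-degenerate source; and the last term is exponentially small in $\ell$ by the large-deviation estimate. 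Summing over $i\le n$ and $\ell>\ell^+$ and using $(1+\delta)(1-\delta/2)>1$, the tail $\sum_{\ell>\ell^+}\#\{i\le n:L_i^n\ge\ell\}$ has expectation $O(n^{1-c})$ for some $c=c(\delta)>0$; by Markov's inequality and Borel--Cantelli along $n_k=k^r$ (with $r>1/c$) it is $o(n_k\log n_k)$ a.s., and since $L_i^n$ is non-decreasing in $n$ and $n_{k+1}/n_k\to1$, interpolation extends this to all $n$. Hence $\limsup_n\frac1{n\log n}\sum_iL_i^n\le\frac{1+\delta}H$ a.s.; let $\delta\downarrow0$.

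\emph{Lower bound.} Fix $\delta\in(0,1)$ and put $\ell^-=\lfloor(1-\delta)\log n/H\rfloor$. It suffices to show $\#\{i\le n:L_i^n<\ell^-\}=o(n)$ a.s., for then $\sum_iL_i^n\ge\ell^-(n-o(n))\ge(1-o(1))(1-\delta)\frac{n\log n}H$. A position $i$ has $L_i^n<\ell^-$ only if $X_i^{i+\ell^--1}$ is unique in $X_1^n$. If that block is not ``light'', i.e.\ $p_{\ell^-}(X_i^{i+\ell^--1})\ge e^{-\ell^-H(1+\delta/2)}\ge n^{-1+\delta/2}$, then, examining the $\gtrsim n/(C\log n)$ disjoint translates lying to its right when $i\le n/2$ (resp.\ to its left when $i>n/2$, where one uses that the time reversal of the source is again IID, resp.\ a mixing Markov chain), the probability that none of them reproduces the block is at most $(1-n^{-1+\delta/2})^{n/(C\log n)}\le\exp\bigl(-n^{\delta/2}/(C\log n)\bigr)$, super-polynomially small; the event that the block is light has probability exponentially small in $\ell^-$ by the large-deviation estimate. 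A union bound over $i$ gives $\ep\,\#\{i:L_i^n<\ell^-\}=O(n^{1-c'})$ for some $c'>0$, and Markov's inequality, Borel--Cantelli along $n_k=k^r$, and interpolation (again using monotonicity in $n$) give $\#\{i\le n:L_i^n<\ell^-\}=o(n)$ a.s. Letting $\delta\downarrow0$ yields $\liminf_n\frac1{n\log n}\sum_iL_i^n\ge\frac1H$ a.s., which with the upper bound proves the theorem.

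\emph{Main obstacle.} The crux is the contribution of atypical positions to the upper bound: a single long repeated block can inflate $\sum_iL_i^n$, so it does not suffice that ``most'' positions have $L_i^n\approx\log n/H$ --- one needs the large-deviation tail of the block probability sharp enough that, after summing the collision bound over all $\ell>\ell^+$ and all $i\le n$, the total decays polynomially (not merely being $o(n\log n)$ in expectation), so that Borel--Cantelli applies along a subsequence fine enough for the monotonicity-based interpolation to lose only $o(n\log n)$. In the Markov case this is also where the mixing hypothesis enters: every step invoking independence of disjoint translates --- the collision bound, the recurrence estimate, the time-reversal argument --- must be replaced by a $\psi$-mixing (equivalently, spectral-gap) version, affecting constants but not the structure of the argument.
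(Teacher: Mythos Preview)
The paper does not actually prove this theorem: it is quoted from Shields, with the remark that the original proof was completed in a follow-up paper, and the paper only offers the AEP heuristic together with the comment that proofs of this type ``typically involve arguments involving the return times $R_k$, based on theorems taken from Ornstein and Weiss''. So there is no line-by-line comparison to make; what can be compared is the \emph{strategy}.

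Your route is genuinely different from the one the paper points to. Rather than reducing $L_i^n$ to return times and invoking the Ornstein--Weiss recurrence theorem, you control the distribution of $L_i^n$ directly: a first-moment collision bound for matches, split according to whether the block probability is atypically large, combined with a large-deviation estimate for $-\tfrac{1}{\ell}\log p_\ell(X_1^\ell)$. The arithmetic (the choice of $\ell^\pm$, the inequality $(1+\delta)(1-\delta/2)>1$, the super-polynomial recurrence bound) checks out, and the Borel--Cantelli-plus-monotonicity interpolation is the right closing device. Two places deserve more than the one line you give them. First, the overlap term $g(\ell)$: an overlap at shift $d$ forces period $d$, but you must sum over all $1\le d<\ell$, and it is not immediate that the sum is still exponentially small; in the IID case one can show $\pr(\text{period }d\text{ over length }\ell)\le p_{\max}^{\ell}$ uniformly in $d$ by partitioning indices into residue classes mod $d$, whence $g(\ell)\le \ell\,p_{\max}^{\ell}$, and a spectral-gap argument gives the analogue for mixing Markov chains. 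Second, in the lower bound it is cleaner to interpolate $\sum_{i\le n}L_i^n$ itself (via $\sum_{i\le n}L_i^n\ge\sum_{i\le n_k}L_i^{n_k}$) rather than the count $\#\{i:L_i^n<\ell^-(n)\}$, since $\ell^-$ depends on $n$ and the bookkeeping you sketch does not quite close.

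As for what each approach buys: yours is self-contained and elementary in the IID case, with the hypothesis (an LDP for block probabilities) made explicit. The return-time route the paper alludes to is less hands-on but more robust --- Ornstein--Weiss recurrence is an ergodic-theoretic statement, so it extends to the broader classes treated by Kontoyiannis--Suhov (Doeblin condition) and Quas (countable alphabets), where a block-probability LDP may not be available.
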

Theorem \ref{thm:shields} is given as 
Theorem 1 of \cite{shields}, though the proof
was completed in \cite{shields2}. Shields \cite[Section 3]{shields} shows that
(\ref{eq:shieldsconv}) does not hold in general, suggesting that determining
the class of processes for which convergence holds is a 
difficult problem. However, further progress was made by Kontoyiannis and Suhov
\cite{kontoyiannis}, who extended the convergence to the class of stationary
ergodic finite alphabet processes under a Doeblin condition. In turn,
Quas \cite{quas} extended this result to countable alphabets.

Entropy estimators given by the left-hand side of (\ref{eq:shieldsconv})
have the advantages of being 
non-parametric, computationally efficient and with fast convergence in
$n$. In particular, they
out-perform
naive plug-in estimators which estimate probability mass functions 
$p$ by empirical estimators $\hat{p}$, and then use $H(\hat{p})$
to estimate the entropy (see \cite{gao2} for a detailed simulation
analysis illustrating this).

We can heuristically understand why the result (\ref{eq:shieldsconv})
might hold, using insights given by the Asymptotic Equipartition Property for IID
sources (see \cite[Theorem 3.1.2]{cover}), or Shannon--MacMillan--Breiman theorem
for stationary ergodic
sources (see 
\cite{algoet2}). This latter result states that for 
a stationary ergodic finite alphabet source of entropy $H$, for $m$ large
enough, there exists 
a  `typical set' $\TT_m$ of strings of
length $m$ such that:
\begin{enumerate}
\item A random string lies in $\TT_m$ with probability $\geq 1 -
\epsilon$.
\item Any individual string in $\TT_m$ has probability 
$\in [2^{-m (H+\epsilon)}, 2^{-m(H-\epsilon)}]
\sim 2^{-m H}$.
\end{enumerate}
Hence, if the substring of length $m$ at point $i$ is typical, that is
$x_i^{i+m-1} \in \TT_m$, it has probability
$\sim 2^{-m H}$, so we expect to see it $\sim n 2^{-m H}$ more times.
This means that choosing $m = (\log n)/H$, we expect to see $x_i^{i+m-1}$ once more,
so match length $ L_i^n \sim (\log n)/H$.

However, it is a delicate matter to convert this intuition into a formal
proof, since there are complex dependencies between $L_i^n$ for distinct values of $i$.
The proofs of results such as Theorem \ref{thm:shields} and its
later extensions in
\cite{shields}, \cite{kontoyiannis} and \cite{quas}
 typically involve arguments involving the return times $R_k$, 
based on theorems taken from Ornstein and Weiss \cite{ornstein2,ornstein}. 
\begin{definition} \label{def:definet}
Define $R_k$ to be the time before the block $X_1^k$ is next 
seen:
\begin{equation} R_k = \min \{ t \geq 1: X_1^k = X_{t+1}^{t+k} \}.
\end{equation}
\end{definition}
It is possible to directly estimate
entropy using the return time.
Kac's Lemma \cite{kac} shows that 
 $\ep[R_k | X_1^k = x_1^k] = 1/\pr(X_1^k = x_1^k)$,
for stationary ergodic $X$.
 This intuition
was developed by
Kim \cite{kim}, who proved that $\ep[\log R_k] - k H$
converges to a constant
for independent processes and 
by Wyner (see \cite{wyner4,wyner5}), who proved asymptotic normality of
$(\log R_k - k H)/\sqrt{k}$ under the same conditions.
Corollary 2 of Kontoyiannis \cite{kontoyiannis2} extended 
this to general stationary $X$ satisfying mixing conditions. 

A simpler problem to analyse is one where the output of the source
is parsed (partitioned) into non-overlapping
blocks, and the matches take place by a blockwise
comparison (this means that `overlapping matches' are avoided). For
example, the Lempel--Ziv parsing \cite{ziv1, ziv2} breaks
the source down into consecutive blocks formed as `the shortest block
not yet seen'. In this case, as described in Cover and Thomas
\cite{cover}, a natural question with applications to many data
compression algorithms
 is to understand the asymptotic
behaviour of $L_m$, the total length of the first $m$ codewords.
 Aldous and Shields \cite{aldous} proved asymptotic normality of
$L_m$ for IID equidistributed binary processes, a result extended by 
Jacquet and Szpankowski \cite{jacquet} to IID asymmetric binary processes.

An even simpler matching was introduced by Maurer \cite{maurer}.
In this case, the output of the source is partitioned into blocks of fixed length
$\ell$, and matchings sought between them. That is,  we can define block
random variables
$Z_i = X_{(i-1)\ell+1}^{i\ell} \in \alphb^{\ell}$, and  see how
long each block takes to reappear.
\begin{definition} \label{def:defines}
For any $j$, define random variable
\begin{equation} S_j = \min \{ t \geq 1: Z_{j+t} = Z_j \}, \end{equation}
to be the return time of the $j$th block.
\end{definition}
Maurer \cite{maurer} proved that $\log S_1/\ell$ converges to the
entropy $H$ if the source is IID binary, with a similar result proved
for stationary $\psi$-mixing processes by Abadi and Galves in \cite{abadi}.
Johnson \cite{johnson18} proved a Central Limit Theorem for
the average of $\log S_i$, and hence consistency of the resulting
entropy estimates.
\section{Sources with change-points and match positions}
As described in Section \ref{sec:entropy},
previous work on match lengths has typically considered the case of a stationary
or ergodic source process; that is, one with constant distribution over
time. Next we extend this to a model with change-points.
We consider the string $x$ to be generated by the concatenation of two 
 source processes $\mu_1$ and $\mu_2$, with a sample
of length $n \gamma$ and $n (1-\gamma)$ of each. (This parameterization
is the same as that used by \cite{dumbgen} and \cite{benhariz}).
\begin{definition} \label{def:concat}
Sample two independent
infinite sequences $x(1)$, $x(2)$, where $x(i) =
x(i)_{0}^{\infty} \sim \mu_i$ for $i = 1,2$. Given length
parameter $n$ and change-point ratio $\gamma$, define the concatenated process
$x$ by
\begin{equation}
x_i = \left\{ \begin{array}{ll} x(1)_i  &	\mbox{ if $0 \leq i \leq n
\gamma - 1$, } \\
x(2)_i  &	\mbox{ if $n \gamma \leq i \leq n-1$. } \\ \end{array} \right. 
\end{equation}
\end{definition}
There has been some work concerning the properties of such a concatenated
source, though this has focussed on the case where $\gamma$ is known.
Arratia and Waterman \cite{arratia2,arratia} consider the longest common subsequence 
between the $x(1)$
and $x(2)$ process -- in contrast in some sense we consider average common
subsequences. The papers of Cai, Kulkarni and Verd\'{u} 
\cite{cai2} and of Ziv and Merhav \cite{ziv4} both consider the problem
of estimating the relative entropy from one source to another. The first
paper \cite{cai2} uses algorithms based on the Burrows-Wheeler transform
and Context Tree Weightings, the second \cite{ziv4} defines empirical 
quantities which converge to the relative entropy. However, such analysis
does not directly help us in the setting where $\gamma$ is unknown.

We now define the match positions $T_i^n$ generated by \grma:
\begin{definition} [{\bf \grma}]  \label{def:matchpos}
Taking match lengths $L_i^n$ as introduced in Definition \ref{def:matchlength},
write $\SSC_i^n$ for the positions of the match at $i$
\begin{equation}
\SSC_i^n = \left\{ j : x_i^{i+L_i^n-2} = x_j^{j+L_i^n-2}, 1 \leq j \leq n, j \neq i 
\right\} \end{equation}
and take $T_i^n$ chosen uniformly and independently at random among the elements  
of $\SSC_i^n$.
\end{definition}

Given a realisation of $x$, recall that we hope to detect the change-point -- that is,
to estimate the true value of $\gamma$.  The idea is that substrings
of $x(1)$ are likely to be similar to other substrings of $x(1)$ (and similarly 
for $x(2)$). Hence we expect that if $i \leq n \gamma-1$ 
then $T_i^n$
will tend to be $\leq n \gamma - 1$ as well. 
Similarly, for $i \geq n \gamma$, we 
expect that $T_i^n$ will tend to be $\geq n \gamma$. We consider
constructing a directed graph, with an edge between each $i$ and
the corresponding $T_i^n$, and define the crossings processes
$\clr{j}$ and $\crl{j}$ as in Definition \ref{def:crossings}.

We will look to find $j$ such that $\clr{j}$ and $\crl{j}$ are
small. However, 
consider $j=1$; then $\clr{1} = 1$, and $\crl{1}$ will be
expected to be close to 1. 
This suggests that instead of simply 
minimising $\clr{j}$ and $\crl{j}$ over $j$, we should consider a normalized
version of these quantities. The exact form of Definition \ref{def:crosscount}
is motivated by the martingale arguments used in Appendix
\ref{appdx:sec:fullproof} below.

We give theoretical and simulation results which address how
close $\wgn$ and $\gamma$ are. 
We do not expect to be able to find the change-point exactly, but hope to
prove a consistency result.
We expect that as
$n$ gets larger, the problem will get easier, though this will be controlled
by certain parameters, such as the entropy rates $H(\mu_1)$ and $H(\mu_2)$
and relative entropy rates $D( \mu_1 \| \mu_2)$ and $D(\mu_2 \| \mu_1)$.

\blah{
In this context we introduce the following notation:
\begin{definition} \label{def:ratios}
Given two stationary ergodic source processes $\mu_1$ and $\mu_2$ we 
 define
$\alpha_{12} = D(\mu_1 \| \mu_2)/H(\mu_1)$ and $\alpha_{21} = D(\mu_2 \| \mu_1)/H(\mu_2)$,
where the $D$ are relative entropy rates.
\end{definition}
These quantities play a role in the analogy between \grma\;\; for
$T_i^n$ described in Definition \ref{def:matchpos} and \grmb\;\;
described in Definition \ref{def:changervs} below. That is, by similar
arguments to those which prove the AEP, we can show that
\begin{equation} \label{eq:relentaep}
\lim_{m \tends} \frac{1}{m} \log \left( \frac{\mu_1( x_1^m)}{\mu_2( x_1^m)} \right)
\rightarrow D(\mu_1 \| \mu_2),
\end{equation}
in $\mu_1$-probability. Equivalently, $\frac{\mu_2(x_1^m)}{\mu_1(x_1^m)} \sim 
2^{-mD(\mu_1 \| \mu_2)}$.
The parameter $\alpha_L$ of Definition \ref{def:changervs} below represents
the ratio of the probabilities that a string is seen under $\mu_2$ and $\mu_1$. That is,
heuristically, for a set of `doubly-typical strings' such that 
Equation (\ref{eq:relentaep})
and the AEP of  Section \ref{sec:entropy} holds:
\begin{eqnarray} \label{eq:alphalheur}
\alpha_L & = & \frac{\mu_2(x_1^m)}{\mu_1(x_1^m)} 
\sim 2^{ -m D(\mu_1 \| \mu_2 )} 
\sim 2^{ -\log n D(\mu_1 \| \mu_2 )/H(\mu_1)} = n^{-\alpha_{12}},
\end{eqnarray}
using the heuristic from Section \ref{sec:entropy} that $m \sim \log n/H(\mu_1)$.
Similarly, $\alpha_R \sim n^{-\alpha_{21}}$.}

\section{Consistency of $\wgn$ for
toy source model} \label{sec:toy}

The theoretical analysis of $\wgn$ under \grma\; is a complex problem.
However, we prove consistency of $\wgn$ in a 
related scenario, where $T_i^n$ are generated as mixtures of
uniform distributions, which we refer to as \grmb, as follows:
\begin{definition} [{\bf \grmb}] \label{def:changervs}
Given parameters $0 <
\alpha_L < 1$ and $0 < \alpha_R < 1$, write
$\delta_L = (\gamma + (1-\gamma) \alpha_L)$ and $
\delta_R = ( \gamma \alpha_R + (1-\gamma))$. 
Define independent random variables $T_i^n$ such that:
\begin{enumerate}
\item
for each $0 \leq i \leq n \gamma - 1$,
$ \diy \pr(T_i^n = j) = \left\{  \begin{array}{ll}
\frac{1}{n \delta_L} & 0 \leq j \leq n  \gamma -1, \\
\frac{\alpha_L}{n \delta_L} & n  \gamma \leq j \leq n-1. \\
\end{array} 
\right.  $
\item
for each $n \gamma \leq i \leq n$,
$ \diy \pr(T_i^n = j) = \left\{  \begin{array}{ll}
\frac{\alpha_R}{n \delta_R} & 0 \leq j \leq n  \gamma -1, \\
\frac{1}{n \delta_R} & n  \gamma \leq j \leq n-1. \\
\end{array} 
\right.  $
\end{enumerate}
\end{definition}

Theorem \ref{thm:main1} proves that $\wgn$ is consistent in this case.
The proof of Theorem \ref{thm:main1} is built on a series of results,
and described in Appendix \ref{appdx:sec:fullproof}. 
First in
Appendix \ref{appdx:sec:matchiid}, we understand the behaviour of the 
crossings processes in a situation with no change-point. This establishes
the martingale tools we will use and allows us to prove a fluid limit,
as described in for example
\cite{darling}. That is, we show that in a model with no change-point
 the normalized crossings 
process $\psi_{LR}$
is a martingale, and use Doob's submartingale inequality to control
the deviation of the crossing process from its mean.

In Appendix \ref{appdx:sec:change}, we consider models with a change-point.
We develop the previous argument to prove that again in this case
functions related to $\psi_{LR}$ are martingales, and hence control their
difference from their mean.
We use this to 
deduce where the crossing function will be minimised, and
complete the proof of consistency of $\wgn$.

Note that in order to prove consistency of $\wgn$, it is not enough to
control the marginal distributions of $\psi_{LR}(j)$ and $\psi_{RL}(j)$;
we need uniform control of the crossings processes.
Although our proof of Theorem \ref{thm:main1}
is based on Doob's submartingale inequality, we briefly mention that it is possible
to gain an understanding of the crossings process in terms of empirical
process theory. The link between these two methods is perhaps not a surprise,
since similar relationships have been used for example by Wellner \cite{wellner}.

Recall that, given independent $U_i \sim U[0,1]$, then writing the empirical
distribution function
$ F_{n}(x) = \frac{1}{n} \sum_{i=1}^n \I( U_i \leq x)$, and 
$D_n = \sup_x |F_n(x) - x|$, Kolmogorov \cite[Theorem 1]{kolmogorov2} 
proved that $\sqrt{n} D_n$ converges in law to the so-called Kolmogorov 
distribution. This result can be understood in the context of Donsker's
Theorem, which states that
$ \sqrt{n} (F_n(x) - x)$ converges in distribution to a Brownian
bridge $B(x)$ (see for example \cite[Theorem 3.3.1, p.110]{shorack}).
The fact that the supremum of $|B(x)|$ has the Kolmogorov distribution
can be proved using the reflection principle; see for example
\cite[Proposition 12.3.4]{dudley}. 

We can use related ideas to describe the crossings process
$\psi_{LR}$ of Definition \ref{def:crosscount}
in the sense of finite dimensional distributions, in the context of 
the model without change-points used in Appendix \ref{appdx:sec:matchiid}.

\begin{lemma} \label{lem:mainemp}
For each $0 \leq i \leq n-1$, define $T_i^{n}$ independently 
uniformly distributed on $\{ 0, \ldots, n-1 \}$. The process $\sqrt{n}
\left( \psi_{LR}(\alpha n) \right) \rightarrow \sqrt{\alpha} W(\alpha/(1-\alpha))$,
in the sense of finite dimensional distributions. In particular,
for fixed $\alpha$ the
$\diy \sqrt{n}
\psi_{LR}(\alpha n) \convd N \left(0, \frac{\alpha^2}{1-\alpha}
\right)$.
\end{lemma}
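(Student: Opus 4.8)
The plan is to compute the mean and covariance structure of $\clr{\alpha n}$ directly, since under the no-change-point model the $T_i^n$ are i.i.d.\ uniform on $\{0,\dots,n-1\}$, and then invoke a central limit theorem. Fix $j = \alpha n$. By definition $\clr{j} = \#\{k : k < j \leq T_k^n\} = \sum_{k=0}^{j-1} \I(T_k^n \geq j)$, a sum of $j$ independent Bernoulli variables. For $k < j$, $\pr(T_k^n \geq j) = (n-j)/n$, so $\clr{j}$ has a $\bino{j}{(n-j)/n}$ distribution. Hence $\ep[\clr{j}] = j(n-j)/n$ and $\var(\clr{j}) = j(n-j)j/n^2$. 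Plugging into $\psi_{LR}(j) = \clr{j}/(n-j) - j/n$ gives $\ep[\psi_{LR}(j)] = 0$ and $\var(\psi_{LR}(j)) = j\cdot (j/n^2) / (n-j) \cdot (n-j) \cdot$ — more carefully, $\var(\psi_{LR}(j)) = \var(\clr{j})/(n-j)^2 = j^2/(n^2(n-j))$. With $j = \alpha n$ this is $\alpha^2 n^2 / (n^2 \cdot n(1-\alpha)) = \alpha^2/(n(1-\alpha))$, so $\var(\sqrt{n}\,\psi_{LR}(\alpha n)) = \alpha^2/(1-\alpha)$, matching the claimed limiting variance.

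For the marginal CLT, I would apply the Lindeberg--Feller theorem (or simply the classical CLT for triangular arrays of Bernoullis, since the success probability $(n-j)/n = 1-\alpha$ is bounded away from $0$ and $1$): $\sqrt{n}\,\psi_{LR}(\alpha n) = \frac{\sqrt n}{n-j}\bigl(\clr{j} - \ep\clr{j}\bigr)$ is $\frac{\sqrt n}{n(1-\alpha)}$ times a centered sum of $\alpha n$ i.i.d.\ bounded terms with variance $\alpha n \cdot (1-\alpha)\alpha \to \infty$, so after normalization it converges to $N(0, \alpha^2/(1-\alpha))$. For the finite-dimensional statement, I would fix $0 < \alpha_1 < \dots < \alpha_m < 1$, write each $\clr{\alpha_r n}$ as the partial sum $\sum_{k < \alpha_r n}\I(T_k^n \geq \alpha_r n)$, and compute the joint covariances $\cov(\clr{\alpha_r n}, \clr{\alpha_s n})$ for $r \leq s$: only indices $k < \alpha_r n$ contribute nontrivially, and for such $k$ the pair $(\I(T_k^n \geq \alpha_r n), \I(T_k^n \geq \alpha_s n))$ has covariance $(1-\alpha_s) - (1-\alpha_r)(1-\alpha_s) = \alpha_r(1-\alpha_s)$ (terms with $k$ in different ranges are independent across $k$ and contribute their own variances); summing gives $\cov(\clr{\alpha_r n},\clr{\alpha_s n}) = \alpha_r n\cdot\alpha_r(1-\alpha_s)$ plus lower-order, hence after normalizing by $(n-\alpha_r n)(n-\alpha_s n)$ and scaling by $n$ one obtains the limiting covariance. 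A multivariate CLT (Cram\'er--Wold plus the one-dimensional Lindeberg argument applied to arbitrary linear combinations) then yields joint Gaussian convergence with the stated covariance kernel. Finally I would check that this kernel is precisely that of $\sqrt{\alpha}\,W(\alpha/(1-\alpha))$: the time-changed Wiener process $Z(\alpha) := \sqrt{\alpha}\,W(\alpha/(1-\alpha))$ has $\cov(Z(\alpha_r), Z(\alpha_s)) = \sqrt{\alpha_r\alpha_s}\cdot \min(\alpha_r/(1-\alpha_r), \alpha_s/(1-\alpha_s)) = \sqrt{\alpha_r\alpha_s}\cdot\alpha_r/(1-\alpha_r)$ for $r \leq s$, and one verifies this agrees with the normalized limit $\alpha_r^2(1-\alpha_s)/((1-\alpha_r)(1-\alpha_s)) = \alpha_r^2/(1-\alpha_r)$ — so in fact the representation should be read so that the covariances coincide; I would double-check the exact form of the time change and the prefactor here, as this bookkeeping is the one place where a stray factor can creep in.

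The main obstacle is not the limit theorem itself — with i.i.d.\ uniform $T_i^n$ everything reduces to sums of independent Bernoullis, for which CLTs are standard — but rather the combinatorial bookkeeping of the joint covariance structure across several crossing levels $\alpha_1 < \dots < \alpha_m$, and then matching it cleanly to the covariance of the proposed Gaussian process $\sqrt{\alpha}\,W(\alpha/(1-\alpha))$. Care is needed because $\clr{j}$ sums over the index range $k < j$, which itself depends on $j$, so the usual "independent increments" picture of empirical processes is distorted by the $1/(n-j)$ normalization; the payoff is the non-stationary, non-bridge-like covariance kernel appearing in the statement. I would therefore present the covariance computation carefully and only sketch the (routine) verification of the Lindeberg condition and the appeal to the multivariate CLT.
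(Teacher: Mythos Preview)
The paper does not actually supply a proof of this lemma; it is stated in Section~\ref{sec:toy} as a side remark motivating the martingale approach, and the only related computations appearing elsewhere are the identification $\clr{j}\sim\bino{j}{(n-j)/n}$ and the variance formula $\var\psi_{LR}(j)=j^2/(n^2(n-j))$ in the proof of Theorem~\ref{thm:main}. Your approach --- write $\clr{j}$ as a sum of independent Bernoullis, compute means and covariances directly, and apply a multivariate Lindeberg CLT via Cram\'er--Wold --- is the natural one and is correct. The marginal variance computation and the one-dimensional CLT are fine as written.

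Your instinct to flag the covariance matching is well founded, and in fact the discrepancy you noticed is real rather than a slip in your bookkeeping. Your computation gives, for $\alpha_r\le\alpha_s$,
\[
\cov\bigl(\sqrt{n}\,\psi_{LR}(\alpha_r n),\sqrt{n}\,\psi_{LR}(\alpha_s n)\bigr)
\;\longrightarrow\;
\frac{\alpha_r^2}{1-\alpha_r},
\]
independent of $\alpha_s$. This is the covariance of the time-changed Brownian motion $\alpha\mapsto W\!\bigl(\alpha^2/(1-\alpha)\bigr)$, not of $\alpha\mapsto\sqrt{\alpha}\,W\!\bigl(\alpha/(1-\alpha)\bigr)$, whose covariance is $\sqrt{\alpha_r\alpha_s}\,\alpha_r/(1-\alpha_r)$. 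The two processes have the same marginal variances $\alpha^2/(1-\alpha)$, so the ``in particular'' clause of the lemma is unaffected, but the finite-dimensional limit as stated in the paper appears to carry a minor misprint in the prefactor. Your proof should therefore record the limiting covariance explicitly and identify the limit as $W(\alpha^2/(1-\alpha))$; everything else in your plan goes through unchanged.
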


However, in order to prove consistency of $\wgn$ we require uniform
control of the crossings process, meaning that martingale tools are 
natural in this context.

\section{Simulation results} \label{sec:simulate}

We illustrate by simulation results how the function $\psi(j)$ 
of Definition \ref{def:crosscount} behaves when $T_i^n$ 
are defined by match lengths, as in \grma\; of
 Definition \ref{def:matchpos}.
Note that since $0 \leq \clr{j} \leq j$ and $0 \leq \crl{j} \leq n-j$,
we know that $- \frac{j}{n} \leq \psi_{LR}(j) \leq
\frac{j^2}{n(n-j)}$ and
$- \frac{n-j}{n} \leq \psi_{RL}(j) \leq
\frac{(n-j)^2}{n j}$. See Figure \ref{fig:envelope} for a schematic 
illustration of the envelopes of these functions.

As Figure \ref{fig:envelope} might suggest, the function $\psi(j)$ can 
take large positive values for
$j$ close to $0$ or $n$. However, since we are looking for the minimum
value of $\psi$, this does not affect the analysis.
In Figure \ref{fig:nochange} we illustrate how $\psi(j)$ behaves
in a null model with no change-point. Observe that $\psi(j)$ remains close to zero except at the
end points, where it can take large positive values, as we would hope.

\begin{figure}[!htbp]
\centering
\includegraphics[width = 2.5in, height=2.5in, viewport = 150 150 500 700]{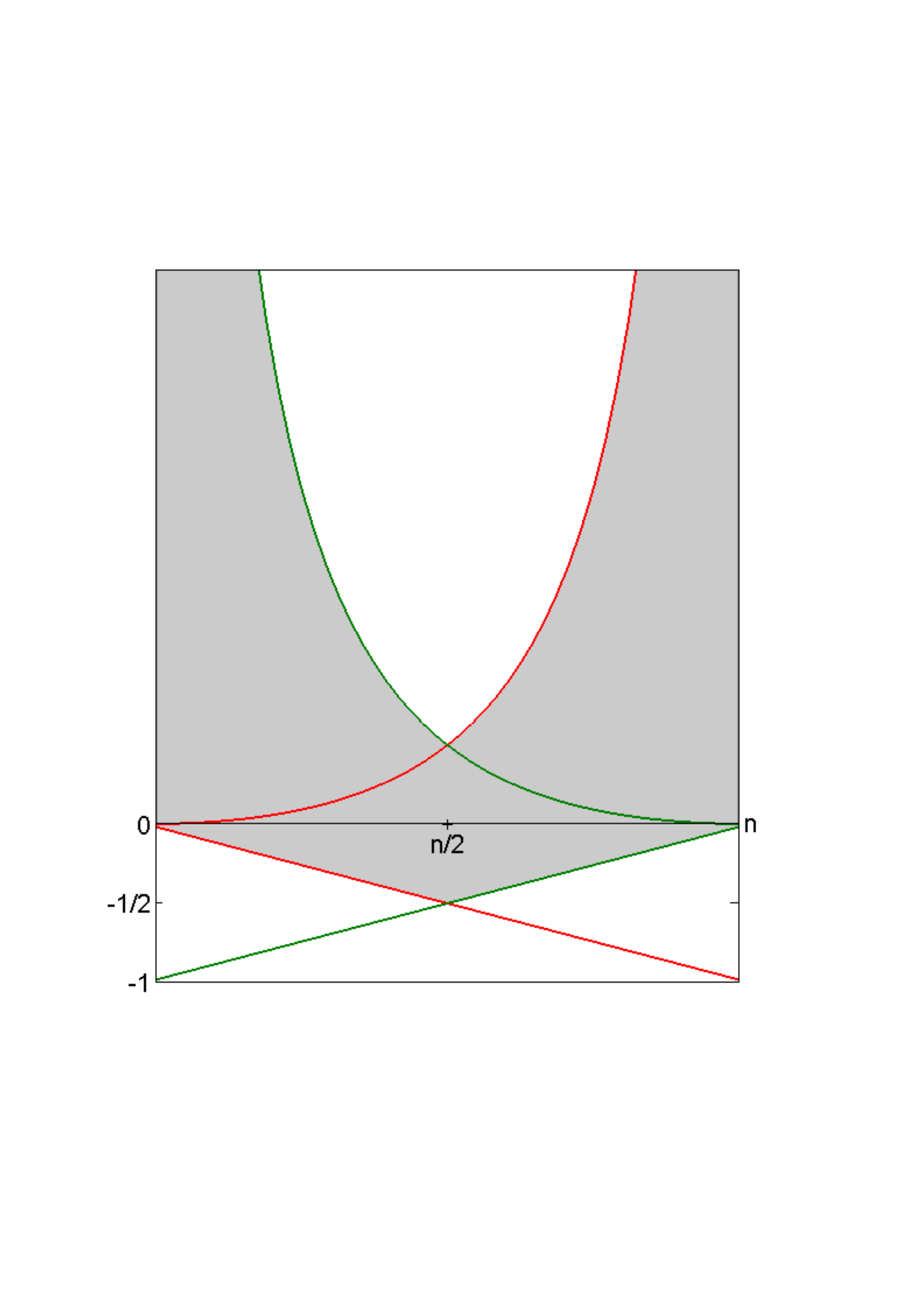}
\caption{
\label{fig:envelope} 
Schematic diagram of bounds on $\psi_{LR}$, $\psi_{RL}$ and 
$\psi$. Red curves bound values of $\psi_{LR}$, green curves
bound $\psi_{RL}$, shaded region is envelope of possible values
of $\psi$.}
\end{figure}

\begin{figure}[!htbp]
\begin{center}
\includegraphics[height=2.5in, viewport = 250 250 250 600]{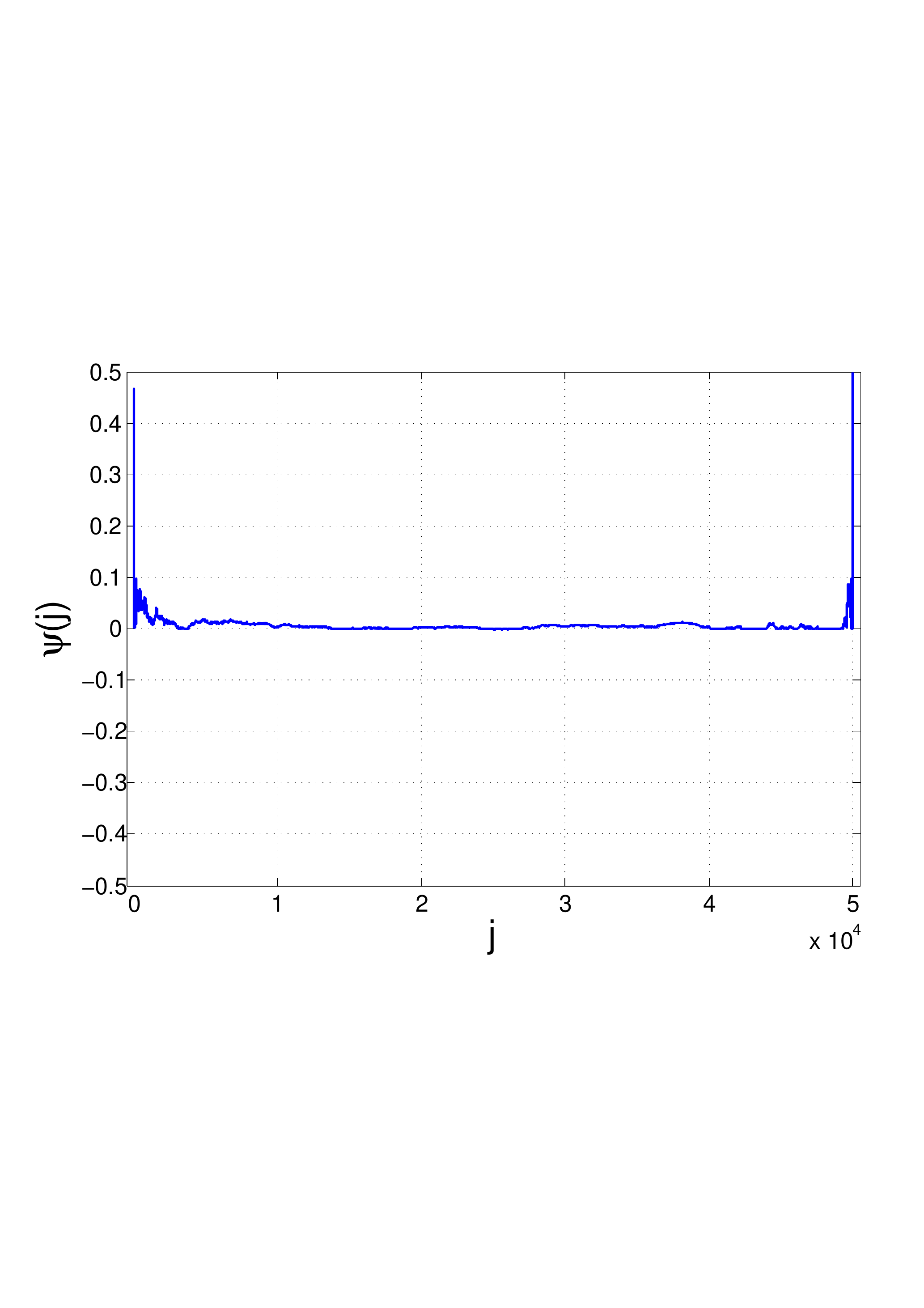}
\end{center}
\caption{\label{fig:nochange} Values of $\psi(j)$ 
simulated
from \grma\; with a source with no change-point.}
\end{figure}

In Figure \ref{fig:change} we plot values of $\psi(j)$ in a model formed by concatenating
two IID sources in the sense of Definition \ref{def:concat}. 
The change-point is marked by a vertical red line, and the function
$\psi(j)$ is minmised very close to this point, as we would hope. Further, in Figure
\ref{fig:change}, the form of the process $\psi(j)$ observed fits closely with
the theoretical properties of the corresponding process $\psi(j)$ for $T_i^n$ generated by
a toy model as in Section \ref{sec:toy}. Specifically, the function $\psi(j)$ remains close
to a piecewise smooth function, except close to the ends of the interval. Further, the piecewise
smooth function is made up of 
three components; a concave function, a linear part, and another
concave function. We explain how this pattern might be expected in Remark \ref{rem:shape}
below.

\begin{figure}[!htbp]
\begin{center}
\includegraphics[height=2.5in, viewport = 250 250 250 600]{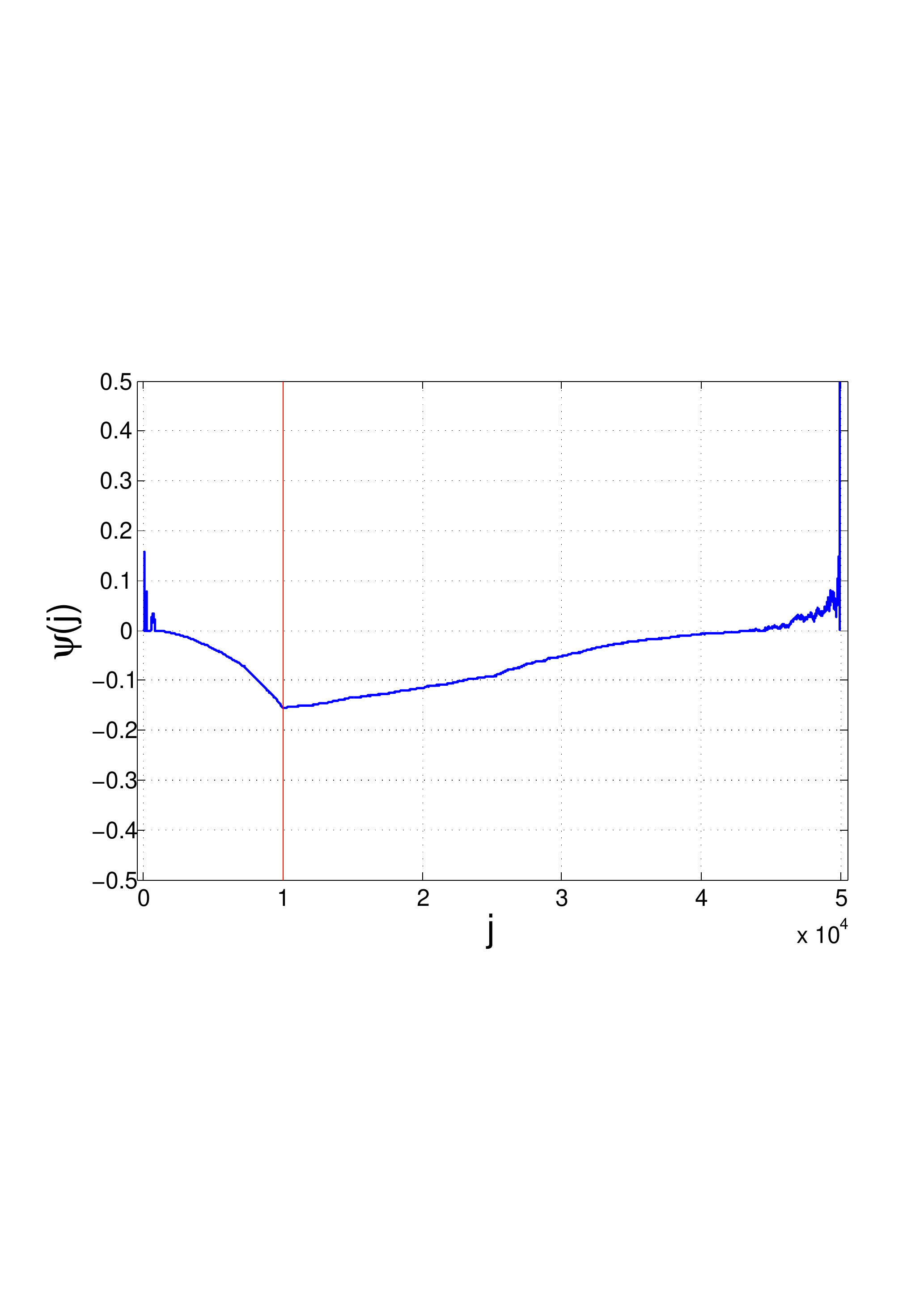}
\end{center}
\caption{\label{fig:change} Values of $\psi(j)$ simulated
from \grma\; with a source with a change-point at 
a position marked by a vertical line. The source is generated by concatenating
10,000 symbols drawn IID from the distribution $(0.1,0.3,0.6)$
with 40,000 symbols drawn IID from the distribution $(0.5,0.25,0.25)$.}
\end{figure}

We illustrate in Figure \ref{fig:markov} how the algorithm performs
over repeated trials simulated under \grma. The histogram illustrates that the algorithm
generally performs well, with a defined peak in estimates $\wgn$
close to the true value $\gamma$.  In particular,
Figure \ref{fig:markov} represents a solution to a
difficult problem, in that it shows that our algorithm can
efficiently partition a concatenation of 
a Markov chain with transition matrix $\diy \left( 
\begin{array}{lll} 
0.1 & 0.5 &  0.4 \\
0.3 &  0.4 & 0.3  \\
0.5 &  0.3 & 0.2  \end{array} 
\right) $ with stationary distribution $(0.3,0.4,0.3)$ and
an IID source with distribution 
$(0.3,0.4,0.3)$. Methods based on crude
symbol counts would fail here, but the algorithm essentially `discovers' non-uniformity
in the digram counts. The skewness of the histogram is 
perhaps to be expected, given the fact that Equations (\ref{eq:Bbound}) and
(\ref{eq:Cbound}) below are not equal (these Equations bound the performance
of the related toy \grmb).

\begin{figure}[!htbp]
\begin{center}
\begin{tabular}{cc}
\begin{minipage}{250pt}
\includegraphics[height=2.3in, viewport = 200 250 500 600]{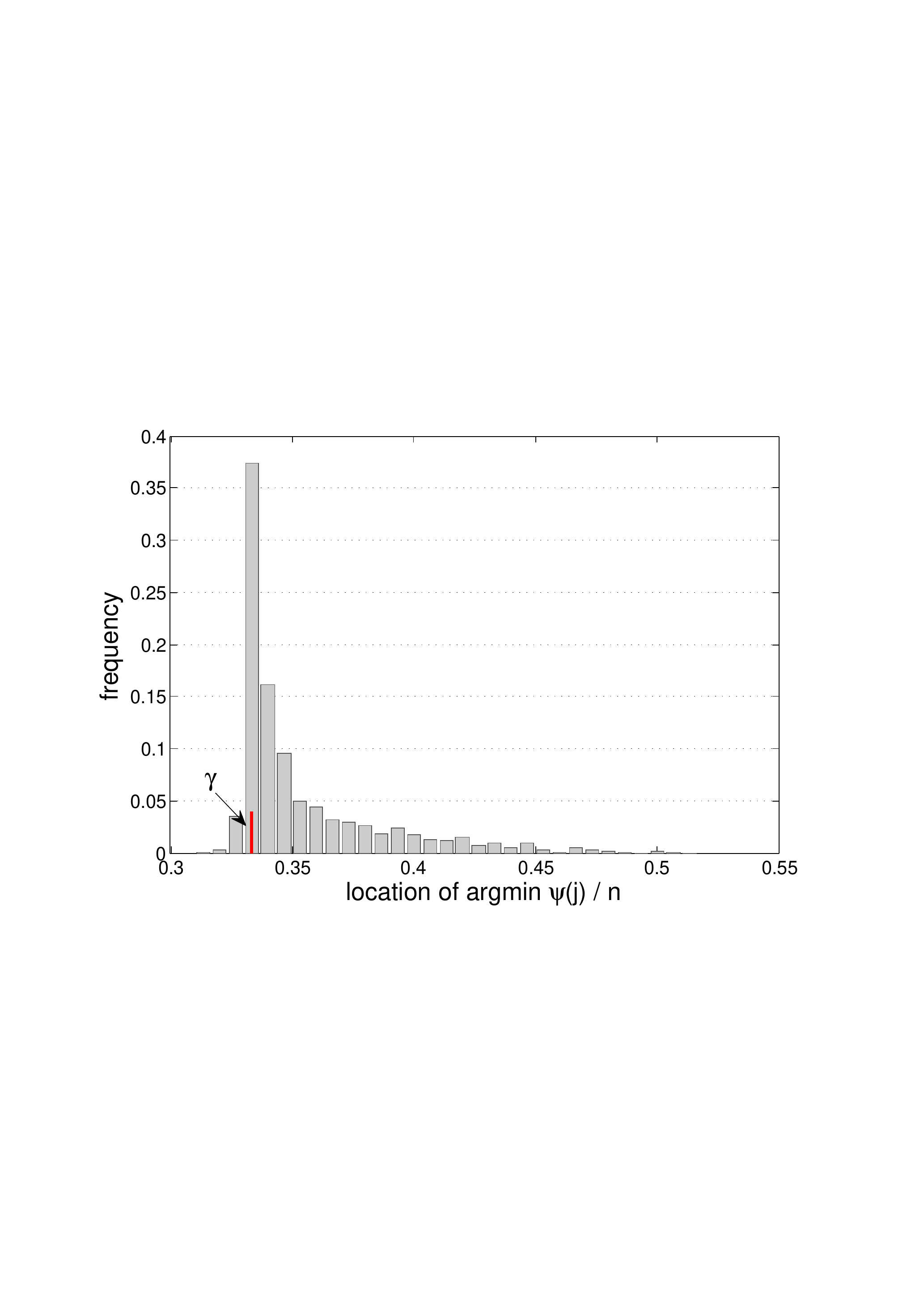} 
\end{minipage} &
\begin{minipage}{250pt}
\includegraphics[height=2.3in, viewport = 200 250 500 600]{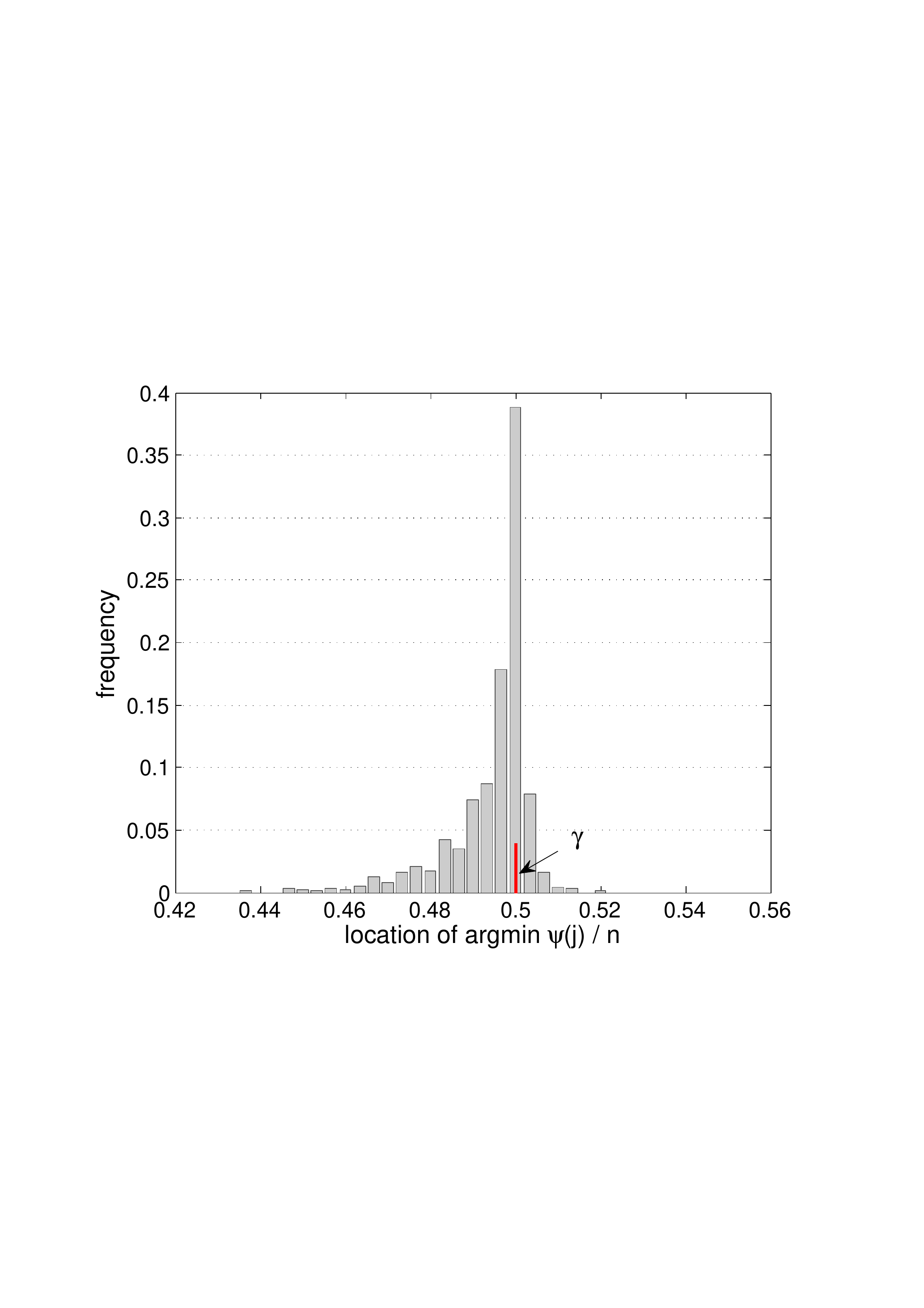} 
\end{minipage} \\
\begin{minipage}{250pt}
\includegraphics[height=2.3in, viewport = 200 250 500 600]{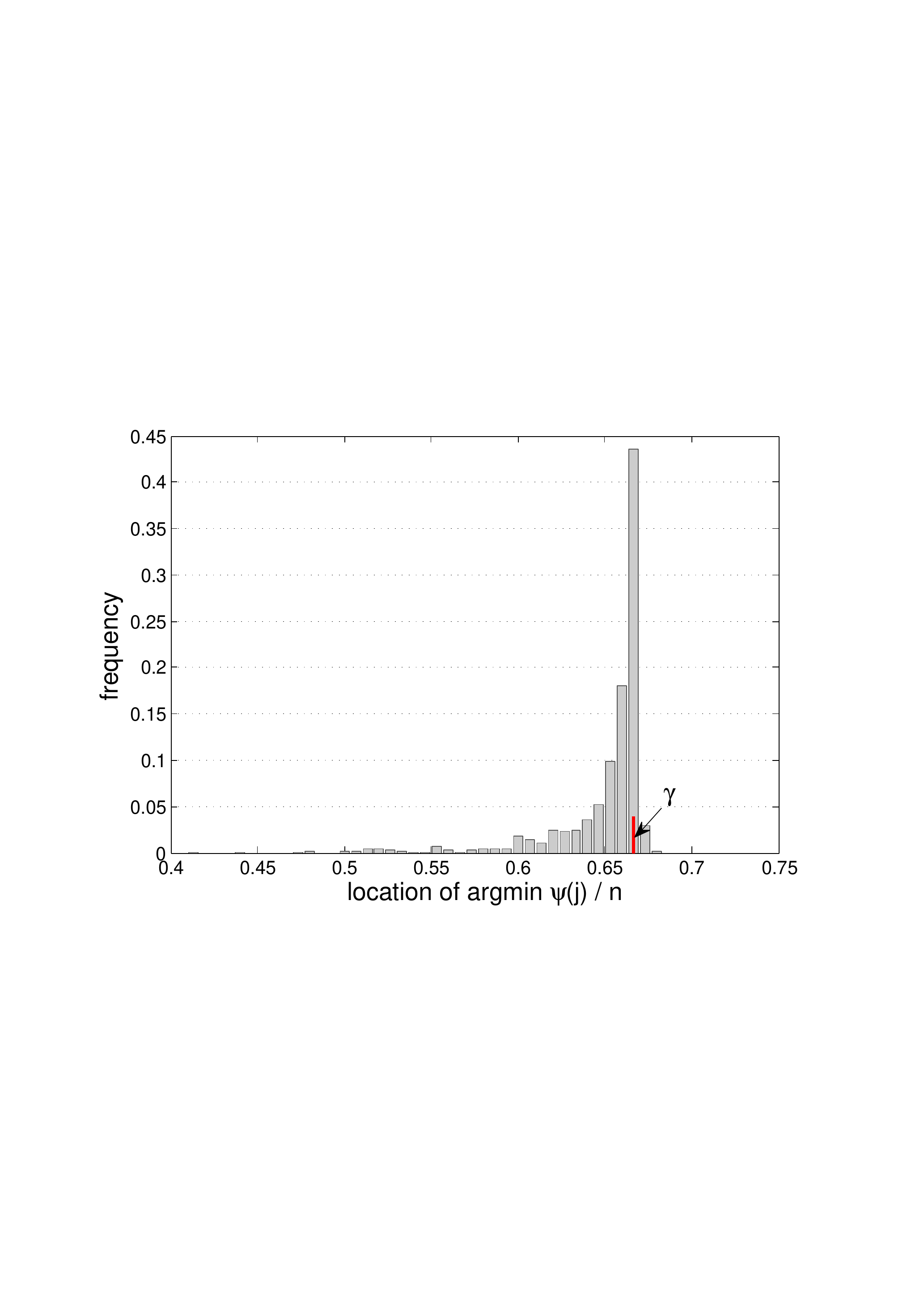}
\end{minipage} &
\begin{minipage}{250pt}
\includegraphics[height=2.3in, viewport = 200 250 500 600]{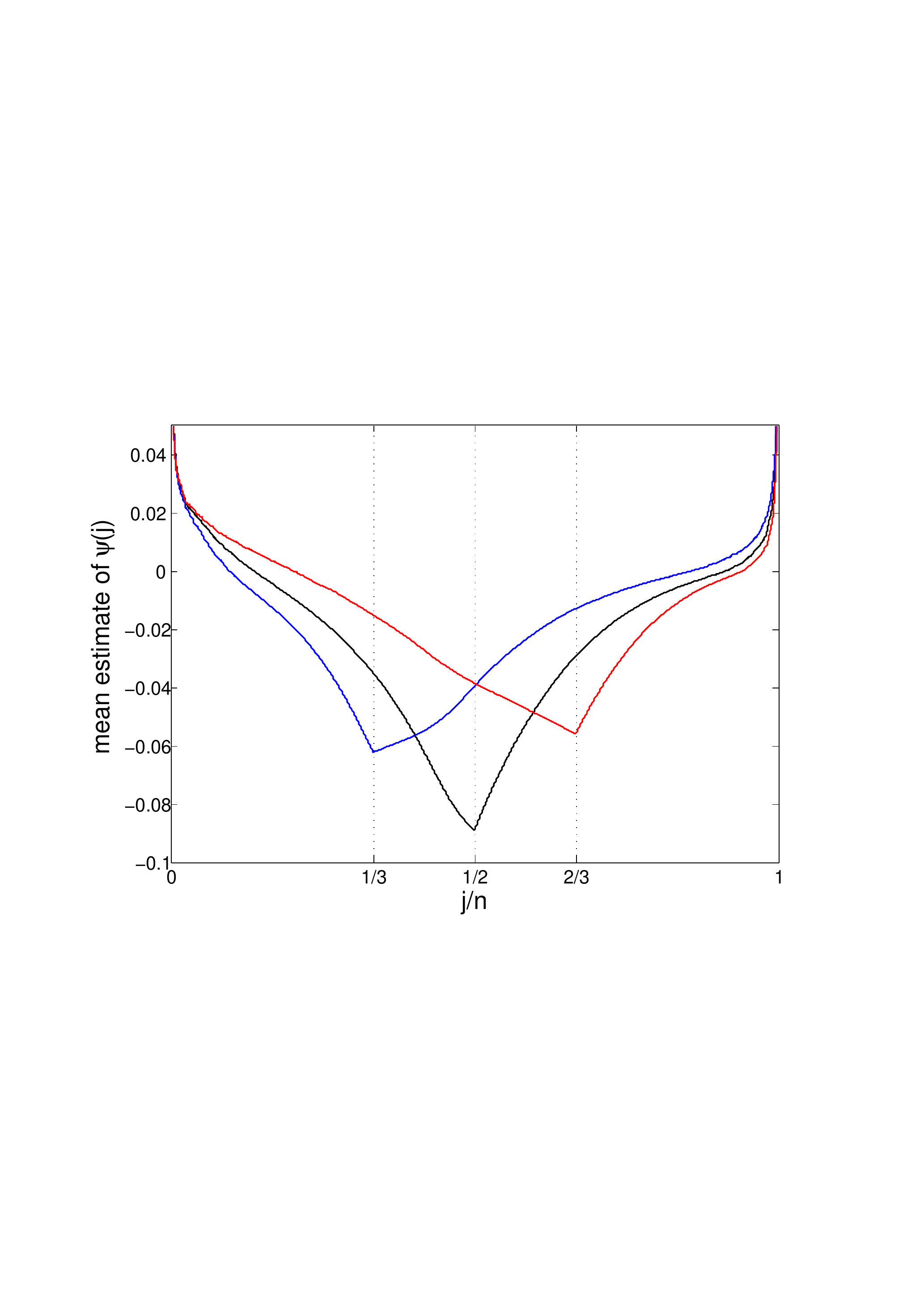}
\end{minipage}
\end{tabular}
\end{center}
\caption{\label{fig:markov} Values of $\wgn$  based on repeated trials from
 \grma\; with a source with a change-point at $\gamma$,
marked by a vertical line. In each case, we take $n=15,000$, and
the source is generated by concatenating
$n \gamma$ symbols drawn from a Markov chain with stationary
distribution $(0.3,0.4,0.3)$,
with $n(1-\gamma)$ symbols drawn IID from the distribution $(0.3,0.4,0.3)$.
The first three figures represent (a) $\gamma = 1/3$ (b) $\gamma =1/2$ (c)
$\gamma = 2/3$.  The fourth figure shows the empirical average of the curve
$\psi$ for the different values of $\gamma$.
In each case, the plot is based on 1000 trials. }
\end{figure}

Even when the two sources are not stationary, our estimator $\wgn$ appears to detect
the change-point accurately. That is, Figures \ref{fig:gereng} and \ref{fig:engeng}
illustrate that our estimator accurately detects the change-point in models built up
by concatenating natural language. In other words, in both figures, the function
$\psi(j)$ is minimised very close to the vertical line. The source of
 Figure \ref{fig:gereng} is formed by concatenating German and English versions of 
Faust, having sanitised the German text to remove  umlauts, in
order to make it look as English as possible. Figure \ref{fig:engeng} depicts
a switch between two English authors. 

Note that the value of $\psi( \wgn)$ is lower
for Figure \ref{fig:gereng} than for Figure \ref{fig:engeng}, illustrating the
natural idea that two English authors are harder to distinguish than two authors
writing in different languages. This fits with the simulation evidence provided
in \cite[Section V]{cai2}, where different languages, and 
different authors writing
in English, are distinguished by relative entropy estimates. The authors suggest
\cite[Figures 15 and 17]{cai2}
that the relative entropy from English to German and from German to English
are both around 2.5-2.6, whereas the relative entropy from one English author to
another is typically around 0.3. However, note that the paper
\cite{cai2} considers a 
different situation, in that they consider a corpus of
separate texts with authors
already distinguished, whereas this paper shows how  to partition a text by authorship.

\begin{figure}[!htbp]
\begin{center}
\includegraphics[height=2.5in, viewport = 250 250 250 700]{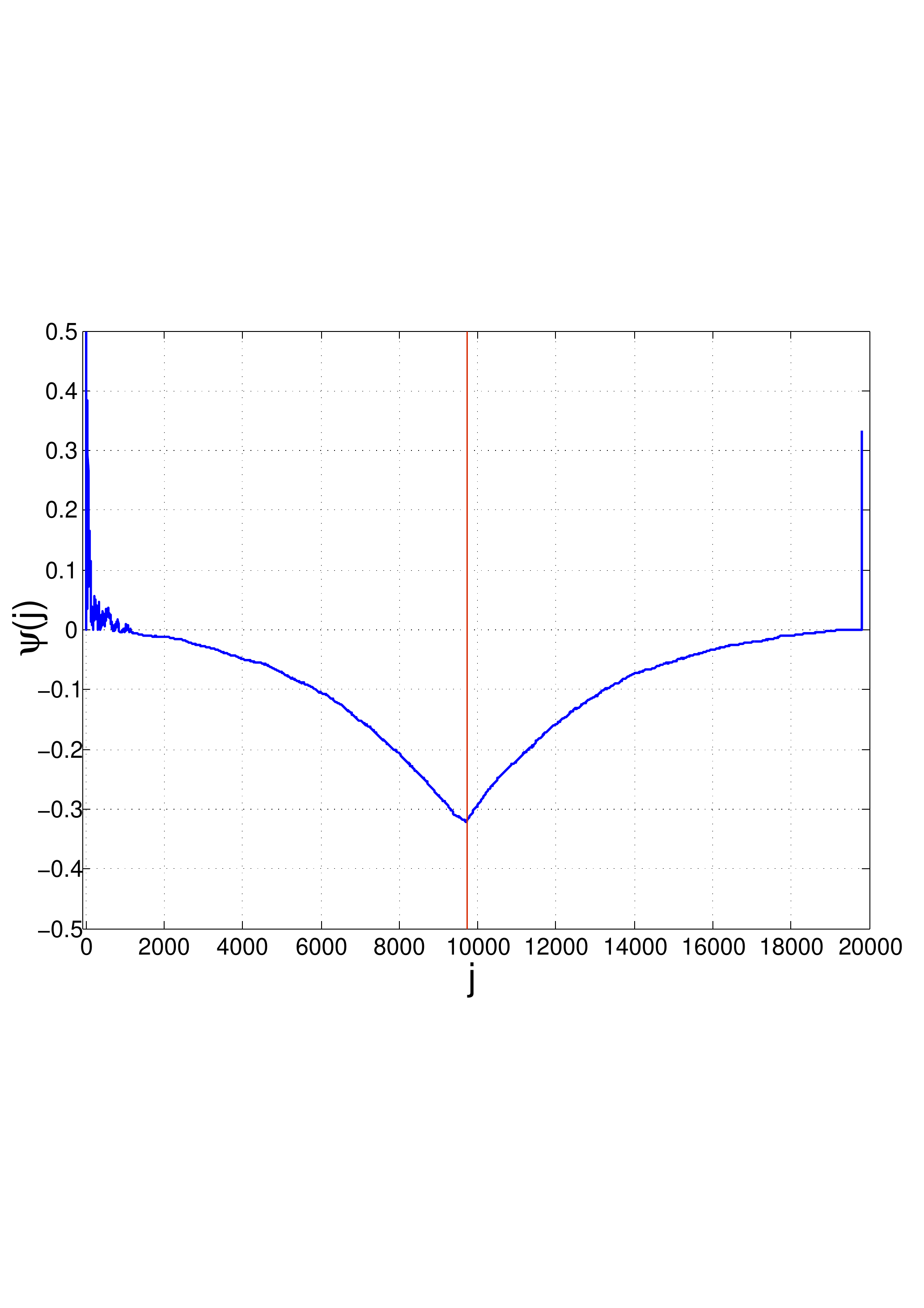}
\end{center}
\caption{\label{fig:gereng} Values of $\psi(j)$ 
generated from \grma\; with a source which switches from
German to English versions of Faust
at the position marked by a vertical line.}
\end{figure}

\begin{figure}[!htbp]
\begin{center}
\includegraphics[height=2.3in, viewport = 250 250 250 600]{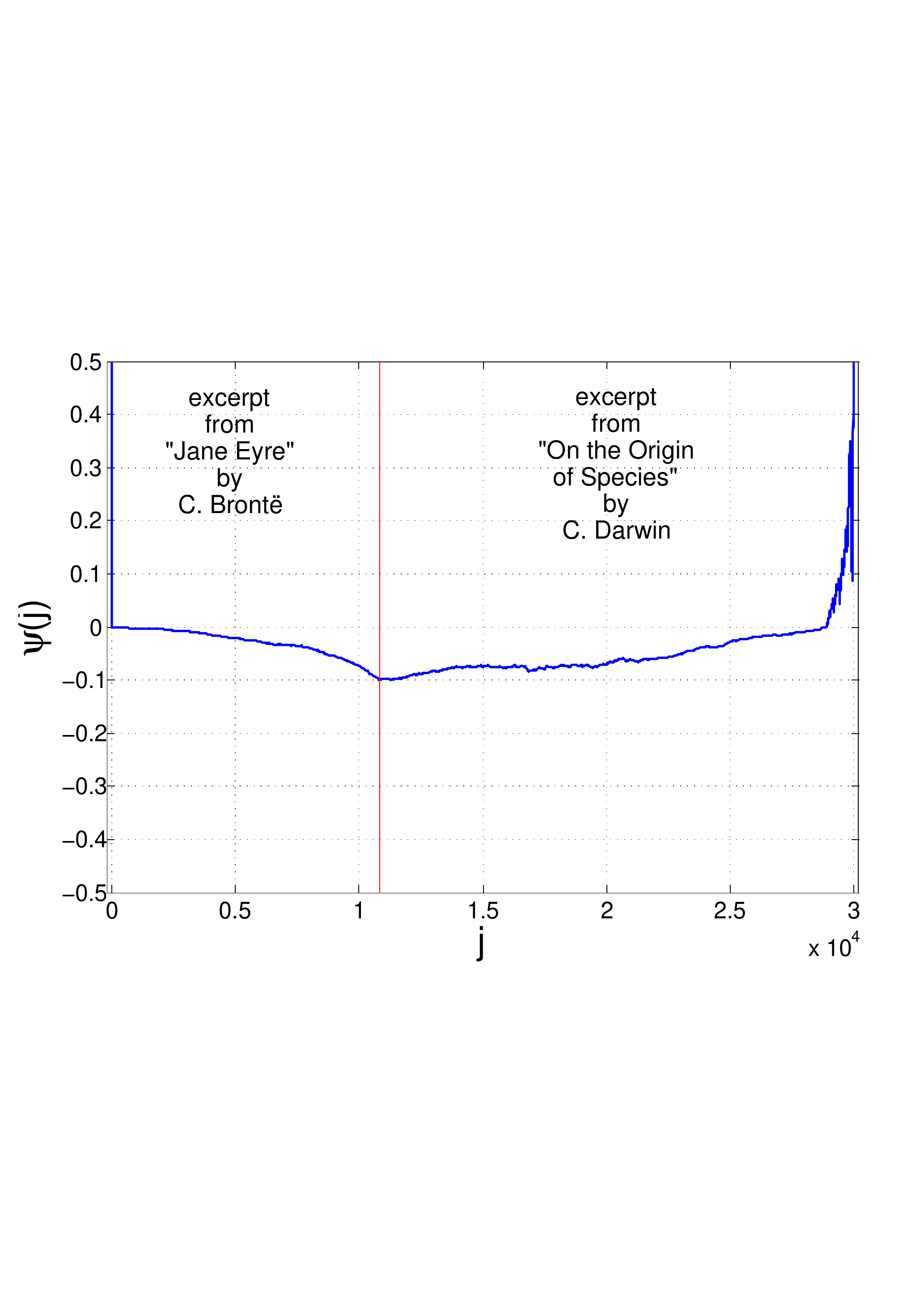}
\end{center}
\caption{\label{fig:engeng} Values of $\psi(j)$ 
generated
from \grma\; with a source
 which switches from
between English authors at the position marked by a vertical line.}
\end{figure}

\clearpage

\section{Discussion}

In this paper we have introduced a new change-point estimator, based
on ideas from information theory. We have demonstrated that it
works well for a variety of data sources, and proved 
$\sqrt{n}$-consistency in a related toy problem.
We believe that the CRECHE $\wgn$ can be adapted to detect 
change-points in a variety of related scenarios, and point out
some directions for future research.

\begin{enumerate}
\item
First, we hope to prove consistency of $\wgn$ under \grma, by
establishing a version of Theorem \ref{thm:main1}. This is likely to
require an analysis of return times similar to those described in
Section \ref{sec:entropy}, taking into account the complicated
dependencies that exist between return times of distinct and
overlapping substrings. However, we regard Theorem \ref{thm:main1}
as a significant first step towards proving such a result, since
the simulation results presented in this paper suggest that
the estimator behaves similarly in both cases. 

We note that, under
\grma,
we expect the rate of convergence of $\wgn$ to $\gamma$ to be
quicker than the $O_{\pr}(1/\sqrt{n})$ obtained in Theorem 
\ref{thm:main}, and perhaps even comparable with the $O_{\pr}(1/n)$
obtained by \cite{benhariz}. This is because a joint version of 
the Asymptotic Equipartition Property suggests that a typical
string of length $O(\log n)$ from $\mu_1$ will have $\mu_2$-probability
decaying like $O(n^{-c})$ for a certain constant. This suggests
that in terms of the toy model, we should consider crossing probabilities
$\alpha_L$ and $\alpha_R$ decaying to 0. Remark \ref{rem:faster}
below shows that in the case $\alpha_L = \alpha_R = 0$, much
faster convergence is achieved in the toy model.

\item 
Second, we believe that these consistency results should extend
to scenarios with multiple change-points (assuming the number
of change-points is low compared to the length of the data stream).
In this case, simulations show that $\psi(j)$ should have several
local minima, each corresponding to a change-point, but the 
analysis required to prove this is more involved.

\item
Third, we believe that estimators of CRECHE type can be extended
to real-valued data, as opposed to those coming from finite alphabets.
In this setting, we should be able to construct a directed graph
using closest matchings in Euclidean distance, motivated
by ideas from rate-distortion theory. We can then use
the crossings function in precisely the same way.

\item
Finally, in future work we will address the issue of quickest
detection of change-points in streaming data, in the spirit of
\cite{poor}. By estimating the typical set during the burn-in
period, we believe that match lengths can act as a proxy for
the log-likelihood in the CUSUM test.
\end{enumerate}

\appendix

\section{Proof of Theorem \ref{thm:main1}} \label{appdx:sec:fullproof}
\subsection{Matchings in an IID setting} \label{appdx:sec:matchiid}

First, we consider the behaviour of the crossings function in a simpler situation
than the \grmb\; of
Definition \ref{def:changervs}, by considering a model without a change-point,
analogous to Figure \ref{fig:nochange}. We obtain uniform control of the type required.

\begin{theorem} \label{thm:main}
For each $0 \leq i \leq n-1$ define $T_i^n$ independently 
uniformly distributed on $\{0, \ldots, n-1 \}$. For the
normalized crossings process $\psi_{LR}(j)$
of Definition \ref{def:crosscount},
for any $0 \leq \alpha \leq 1$ and $s > 0$, 
\begin{equation}
 \pr \left( \sup_{0 \leq j \leq n(1-\alpha)} \left|
\psi_{LR}(j) \right| \geq \frac{s}{\sqrt{n}}
\right)
\leq \frac{(1-\alpha)^2}{\alpha s^2}, \end{equation}
that is, $\left\{ | \psi_{LR}(j) | \leq \frac{1-\alpha}{\sqrt{\alpha n 
\epsilon}}, 0 \leq j \leq
(1-\alpha)n \right\}$ is
a pathwise $(1-\epsilon)$ confidence region on the process.
\end{theorem}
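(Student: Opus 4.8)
The plan is to exhibit the process $\psi_{LR}(j)$, for $j = 0, 1, \ldots, n-1$, as a martingale started at $0$ with respect to a suitable filtration, and then apply Doob's maximal inequality to its square. First I would record the mean-zero property: since each $T_i^n$ is uniform on $\{0,\ldots,n-1\}$ we have $\pr(T_k^n \geq j) = (n-j)/n$, so $\ep[\clr{j}] = j(n-j)/n$ and $\ep[\psi_{LR}(j)] = 0$; in particular $\clr{0}=0$ gives $\psi_{LR}(0) = 0$ deterministically.

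The crux is choosing the filtration. The obvious choice $\sigma(T_0^n,\ldots,T_{j-1}^n)$ fails: it reveals exactly which of these variables exceed $j+1$, which is strictly more information than $\clr{j}$ alone, and the martingale identity breaks. Instead I would use the \emph{censored} filtration $\mathcal{F}_j = \sigma\bigl( T_k^n \wedge j : 0 \leq k \leq j-1 \bigr)$; that is, at stage $j$ we know, for each $k < j$, the exact value of $T_k^n$ when it is below $j$ and otherwise only the event $\{T_k^n \geq j\}$, and nothing at all about $T_k^n$ for $k \geq j$. This is nested in $j$, it makes $\clr{j}$ and hence $\psi_{LR}(j)$ measurable, and it has the two features that make the computation go through: $T_j^n$ is independent of $\mathcal{F}_j$, and, conditionally on $\mathcal{F}_j$, each of the $\clr{j}$ indices $k < j$ with $T_k^n \geq j$ has $T_k^n$ uniform on $\{j,\ldots,n-1\}$, independently. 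Writing $\clr{j+1} = \clr{j} - m_j + \I(T_j^n \geq j+1)$ with $m_j = \#\{k < j : T_k^n = j\}$, one then gets $\ep[m_j \mid \mathcal{F}_j] = \clr{j}/(n-j)$ and $\ep[\I(T_j^n\geq j+1)\mid \mathcal{F}_j] = (n-j-1)/n$; substituting these into $\ep[\psi_{LR}(j+1) - \psi_{LR}(j)\mid\mathcal{F}_j]$, the terms in $\clr{j}$ cancel against the $-1/n$ drift and the conditional difference is exactly $0$, so $(\psi_{LR}(j),\mathcal{F}_j)$ is a martingale.

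Given this, $\psi_{LR}(j)^2$ is a non-negative submartingale, and Doob's submartingale inequality with $N = \lfloor n(1-\alpha)\rfloor$ and level $\lambda = s/\sqrt n$ gives
\[
\pr\Bigl( \sup_{0 \leq j \leq N} |\psi_{LR}(j)| \geq \tfrac{s}{\sqrt n} \Bigr)
 \leq \frac{\ep[\psi_{LR}(N)^2]}{s^2/n} = \frac{n\,\var(\psi_{LR}(N))}{s^2}.
\]
Since $\clr{N} \sim \bino{N}{(n-N)/n}$ we have $\var(\clr{N}) = N^2(n-N)/n^2$, hence $\var(\psi_{LR}(N)) = \var(\clr{N})/(n-N)^2 = N^2/\bigl(n^2(n-N)\bigr)$; this is increasing in $N$, so it is at most $\bigl((1-\alpha)n\bigr)^2/\bigl(n^2\cdot\alpha n\bigr) = (1-\alpha)^2/(\alpha n)$, and the displayed bound becomes $(1-\alpha)^2/(\alpha s^2)$, as claimed. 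The ``confidence region'' restatement is then pure algebra: putting $\epsilon = (1-\alpha)^2/(\alpha s^2)$, equivalently $s/\sqrt n = (1-\alpha)/\sqrt{\alpha n \epsilon}$, the inequality reads $\pr\bigl( |\psi_{LR}(j)| < (1-\alpha)/\sqrt{\alpha n \epsilon}\ \text{for all}\ 0 \le j \le (1-\alpha)n \bigr) \geq 1-\epsilon$.

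The only genuine obstacle is in the second paragraph: realising that the filtration must be the censored one $\sigma(T_k^n \wedge j : k<j)$ rather than $\sigma(T_k^n : k<j)$, and checking the two conditional expectations that make the drift vanish. Once the martingale property is established, the Doob step, the binomial variance computation, and the algebraic rearrangement are all routine.
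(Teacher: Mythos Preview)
Your proof is correct and follows essentially the same approach as the paper: identify $\psi_{LR}(j)$ as a mean-zero martingale, apply Doob's submartingale inequality to its square, and evaluate the variance via $\clr{j}\sim\bino{j}{(n-j)/n}$. The only difference is packaging: the paper observes that $\clr{j}$ evolves as a thinned-plus-Bernoulli (INAR(1)) process and invokes a separate lemma on such processes to obtain the martingale property, whereas you verify it directly by building the explicit censored filtration $\mathcal{F}_j=\sigma(T_k^n\wedge j:k<j)$ and computing $\ep[\psi_{LR}(j+1)\mid\mathcal{F}_j]$ by hand.
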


The control of  $\left| \psi_{LR}(j) \right|$ provided by
Theorem \ref{thm:main} is of optimal order, in the following two senses:

\begin{remark} \label{rem:envelope} \mbox{ }
\begin{enumerate}
\item We cannot improve the order (in $n$) of the uniform bound. By Lemma
\ref{lem:mainemp}, the
$\sqrt{n} \psi_{LR}(n(1-\alpha)) 
\convd N(0, (1-\alpha)^2/\alpha)$, so
that
\begin{eqnarray}
\liminf_{n \tends}
\pr \left( \sup_{0 \leq j \leq n(1-\alpha)} \left|
\psi_{LR}(j) \right| \geq \frac{s}{\sqrt{n}}
\right)
& \geq & \liminf_{n \tends} \pr \left( \left| \psi_{LR}(n(1-\alpha))
 \right| \geq \frac{s}{\sqrt{n}}
\right) \nonumber \\
& = & 2\left( 1 - \Phi \left( \frac{s \sqrt{\alpha}}{1-\alpha} \right) 
\right). 
\end{eqnarray}

\item
We cannot expect to control $\psi_{LR}(j)$ 
uniformly in all $j \leq n-1$ to the same
order of accuracy, as the widening envelope in Figure \ref{fig:envelope} might
suggest. Specifically, since $\clr{n-1} \sim \bino{n-1}{1/n} \convd \pois{1}$,
for any $\delta < 1$,
\begin{equation}
 \liminf_{n \tends} \pr \left( \sup_{0 \leq j \leq n-1} \left|
\psi_{LR}(j) \right| \geq \delta
\right) \geq \liminf_{n \tends} \pr(\clr{n-1} = 0) = e^{-1}. \end{equation}
\end{enumerate}
\end{remark}
Remark \ref{rem:envelope}
helps to explain the large fluctuations in $\psi(j)$ seen in Figure 
\ref{fig:nochange}. In this toy model with no change-point: for $j \leq n(1-\alpha)$, the
maximal fluctuations of $\psi_{LR}(j)$ are $O_{\pr}(1/\sqrt{n})$, but for $j \leq n$, the
maximal fluctuations are $O_{\pr}(1)$. Similarly, fluctuations in $\psi_{RL}(j)$ will be $O_{\pr}(1/\sqrt{n})$ for $j$ bounded away from zero, and
$O_{\pr}(1)$ overall.

We first prove a technical lemma regarding the thinning operation
introduced by R\'{e}nyi \cite{renyi4}. That is, for each random
variable $Y$, the $\alpha$-thinned version 
$\thinning{\alpha} Y = \sum_{i=1}^Y B_i^{(\alpha)}$,
where $B_i^{(\alpha)}$ are Bernoulli($\alpha$), independent of each other and of $Y$.
This allows us
to describe a process with binomial marginals which will prove useful
for us. In the language of \cite{alzaid} this process is a (non-stationary)
 first-order
integer-valued autoregressive $INAR(1)$ process, a discrete equivalent
of an AR(1) time series process.
\begin{lemma} \label{lem:martprop}
For fixed $N$ and $\beta$, define a process $(Y_j)$ by
$Y_0 = 0$, and recursively taking
\begin{equation}
 Y_{j+1} \sim \thinning{\frac{N-j-1}{N-j}} Y_j + U_j, \end{equation}
where $U_j \sim \bern{ \frac{\beta(N-j-1)}{N}}$ independently of all
other random variables.
Then, 
\begin{enumerate}
\item For all $j$, the $Y_j \sim \bino{j}{\beta (N-j)/N}$.
\item The process
$ \diy Z_{j} = \frac{Y_j}{N-j} - \frac{
\beta j}{N}$ is a martingale.
\item For any $d$, the process 
$\diy W_j = \left.
\left( 1 + \frac{d}{N-j} \right)^{Y_j} \right/ \left( 1 + \frac{d \beta}{N}
\right)^j$ is a martingale.
\end{enumerate}
\end{lemma}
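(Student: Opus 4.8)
The plan is to isolate two elementary building blocks and then obtain all three parts as short conditional-expectation computations on the natural filtration $\mathcal{F}_j=\sigma(Y_0,\dots,Y_j)$. The building blocks are: (i) if $Y$ is a nonnegative-integer random variable then, conditionally on $Y$, the thinned variable $\thinning{\alpha}Y$ is $\bino{Y}{\alpha}$, so its conditional probability generating function at $z$ is $(1+\alpha(z-1))^{Y}$, and in particular $\ep[\thinning{\alpha}Y\mid Y]=\alpha Y$; and (ii) for $U\sim\bern{p}$ we have $\ep[z^{U}]=1+p(z-1)$. Each $Y_j$ is bounded by $j$, so integrability is automatic and I would not comment on it further; I would also restrict attention to $0\le j\le N-1$, so that the denominators $N-j$ appearing below are nonzero.

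For Part 1 I would induct on $j$, the base case $Y_0=0\sim\bino{0}{\cdot}$ being immediate. Assuming $Y_j\sim\bino{j}{\beta(N-j)/N}$, I would use that thinning a binomial by $q$ gives a binomial: writing $Y_j$ as a sum of $j$ i.i.d.\ Bernoulli$(\beta(N-j)/N)$ variables and thinning each one shows $\thinning{q}Y_j\sim\bino{j}{q\beta(N-j)/N}$. Taking $q=(N-j-1)/(N-j)$ collapses the success probability to $\beta(N-j-1)/N$, which is exactly the parameter of the independent $U_j\sim\bern{\beta(N-j-1)/N}$; adding them gives $Y_{j+1}\sim\bino{j+1}{\beta(N-j-1)/N}=\bino{j+1}{\beta(N-(j+1))/N}$, closing the induction.

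For Part 3, set $c_j=1+\tfrac{d}{N-j}$, so $W_j=c_j^{Y_j}/(1+d\beta/N)^j$. Write $Y_{j+1}=V+U_j$ with $V=\thinning{\frac{N-j-1}{N-j}}Y_j$; conditionally on $\mathcal{F}_j$ these are independent, so $\ep\bigl[c_{j+1}^{\,Y_{j+1}}\mid\mathcal{F}_j\bigr]=\ep\bigl[c_{j+1}^{\,V}\mid\mathcal{F}_j\bigr]\,\ep\bigl[c_{j+1}^{\,U_j}\bigr]$. Building block (i) gives $\ep\bigl[c_{j+1}^{\,V}\mid\mathcal{F}_j\bigr]=\bigl(1+\tfrac{N-j-1}{N-j}(c_{j+1}-1)\bigr)^{Y_j}=\bigl(1+\tfrac{d}{N-j}\bigr)^{Y_j}=c_j^{Y_j}$ — precisely the collapse of the ``$j+1$'' base back to the ``$j$'' base that the thinning parameter $(N-j-1)/(N-j)$ is designed to produce — and building block (ii) gives $\ep\bigl[c_{j+1}^{\,U_j}\bigr]=1+\tfrac{\beta(N-j-1)}{N}(c_{j+1}-1)=1+\tfrac{\beta d}{N}$. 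Dividing by $(1+d\beta/N)^{j+1}$ yields $\ep[W_{j+1}\mid\mathcal{F}_j]=W_j$. Part 2 can then be read off by differentiating $W_j$ in $d$ at $d=0$ (legitimate since, on a neighbourhood of $0$, $W_j$ is a bounded smooth function of $d$ uniformly in the sample, so one may differentiate through the conditional expectation): $\partial_d W_j\big|_{d=0}=\tfrac{Y_j}{N-j}-\tfrac{\beta j}{N}=Z_j$. Alternatively Part 2 is a direct one-line computation: $\ep[Y_{j+1}\mid\mathcal{F}_j]=\tfrac{N-j-1}{N-j}Y_j+\tfrac{\beta(N-j-1)}{N}$, whence $\ep[Z_{j+1}\mid\mathcal{F}_j]=\tfrac{Y_j}{N-j}+\tfrac{\beta}{N}-\tfrac{\beta(j+1)}{N}=Z_j$.

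I do not expect a genuine obstacle; the only care needed is directional bookkeeping — checking that the recursion's thinning parameter really does send $c_{j+1}\mapsto c_j$ in Part 3 (and makes the constant contributed by $U_j$ telescope in Part 2) rather than the reverse — together with the harmless restriction of $j$ to $0\le j\le N-1$. Once building blocks (i) and (ii) are in place, each of the three parts is two or three lines.
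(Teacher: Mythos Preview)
Your proposal is correct and follows essentially the same route as the paper: the same induction for Part~1, the same pgf/conditional-expectation computation for Part~3 (the paper records exactly your two identities as $\alpha_j\gamma_{j+1}+1-\alpha_j=\gamma_j$ and $\beta_j\gamma_{j+1}+1-\beta_j=1+d\beta/N$), and the same direct one-line computation for Part~2. Your extra remark that Part~2 is the $d$-derivative of Part~3 at $d=0$ is a pleasant observation not in the paper, but otherwise the arguments coincide.
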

\begin{proof} \mbox{ } 
\begin{enumerate}
\item Note that this result is true by
definition for $j=0$, we will prove it by induction in general.
Recall that for any $\alpha$, $n$ and $p$, if $Y \sim \bino{n}{p}$ 
then $\thinning{\alpha} Y  \sim \bino{n}{\alpha p}$.  Assuming $Y_j \sim \bino{j}{\beta (N-j)/N}$
for a particular $j$, then
\begin{eqnarray*}
 Y_{j+1} & \sim & \thinning{\frac{N-j-1}{N-j}}  \bino{j}{ \frac{ \beta(N-j)}{N} }
+ \bern{ \frac{ \beta(N-j-1)}{N} } \\
& \sim & \bino{j}{\frac{ \beta(N-j-1)}{N}}
+ \bern{ \frac{ \beta(N-j-1)}{N}} \\
& \sim & \bino{j+1}{\frac{ \beta(N-j-1)}{N}}.
\end{eqnarray*}
\item
This means that $\ep Y_j = \mu_j:= \beta j(N-j)/N$ for all $j$.
As a result, since
$$ \ep \left[ \left. Y_{j+1} \right| Y_j = m \right] 
=  m \frac{N-j-1}{N-j} + \frac{ \beta(N-j-1)}{N},$$
and since $Z_j = u$ exactly when $Y_j = \mu_j + u(N-j)$:
\begin{eqnarray*}
\ep [Z_{j+1} | Z_j = u] & = & \ep \left[ \left. \frac{Y_{j+1}}{N-j-1}
\right| Y_j = \mu_j + u (N-j) \right] - \frac{\beta(j+1)}{N} \\
& = & \left( \frac{\mu_j + u(N-j)}{N-j} + \frac{\beta}{N} \right) 
- \frac{\beta(j+1)}{N} \\
& = & u,
\end{eqnarray*}
by substituting for $\mu_j$.
\item Write $\alpha_j = (N-j-1)/(N-j)$, $\beta_j = \beta (N-j-1)/N$,
 $\gamma_j = 1 + d/(N-j)$ and $L
= (1 + d \beta/N)$. By a similar argument, 
since $\gamma_j^{Y_j} = u$ when $Y_j = \log u/\log \gamma_j =m$ say,
we know that
\begin{eqnarray*}
\ep \left[ \left. \gamma_{j+1}^{Y_{j+1}} \right| \gamma_j^{Y_j} = u \right]
& = & \ep \left[ \left. \gamma_{j+1}^{Y_{j+1}} \right| Y_j = m \right] \\
& = & \sum_{n=0}^m \binom{m}{n} \alpha_j^n (1-\alpha_j)^m \gamma_{j+1}^n (
\beta_j \gamma_{j+1} + 1-\beta_j) \\
& = & \gamma_j^{m} L =  u L,
\end{eqnarray*}
since $\alpha_j \gamma_{j+1} + 1- \alpha_j = \gamma_j$ and
$\beta_j \gamma_{j+1} + 1- \beta_j = L$.
\end{enumerate}
\end{proof}

\begin{proof}[Proof of Theorem \ref{thm:main}]
The key is to observe that for $T$ uniform on $\{ 0, \ldots, n-1 \}$, 
$\pr(T = j | T \geq j) = \pr(T=j)/\pr(T \geq j) = 1/(n-j)$.
This means that the LR crossing process
$\clr{j}$ is a Markov (birth and death) process. If
we know that $\clr{j} = m$, then the $m$ links that cross $j$
will cross $j+1$ independently with probability $1 - 1/(n-j)$.
In addition, there will be a contribution due to $T_j$.

In other words, the process $\clr{j}$ is distributed exactly as $Y_j$ 
in Lemma \ref{lem:martprop}, with $N=n$ and $\beta=1$. This means that by
Lemma \ref{lem:martprop},  
$ \diy \psi_{LR}(j) = \frac{\clr{j}}{n-j} - \frac{j}{n}$
is a martingale.
By a standard argument (see for example \cite[Section 14.6]{williams}), 
since $\psi_{LR}(j)$ is a martingale, Jensen's
inequality implies that $\psi_{LR}(j)^2$ is a submartingale. 
Doob's submartingale inequality \cite[Section 14.6]{williams}
states that for any non-negative submartingale
$V_j$, for any $k$ and $C$:
\begin{equation} \label{eq:doob}
 \pr \left( \sup_{1 \leq j \leq k} V_j \geq C \right)
\leq \frac{ \ep V_k}{C}.\end{equation}

Since $\clr{j} \sim \bino{j}{(n-j)/n}$, the   
$\ep \psi_{LR}(j)^2 = \var \psi_{LR}(j) 
= \var \clr{j}/(n-j)^2 = j^2/n^2(n-j)$,
 so we know that
$\ep \psi_{LR}(n(1-\alpha))^2 = (1-\alpha)^2/( \alpha n)$.

Hence, taking $V_j = \psi_{LR}(j)^2$,
$C = s^2/n$ and $k = n(1-\alpha)$ in 
Equation (\ref{eq:doob}), the theorem follows. \end{proof}

\subsection{Matching in a change-point setting} \label{appdx:sec:change}

We now use the insights of Appendix \ref{appdx:sec:matchiid} to control the 
behaviour of the crossings process $\psi_{LR}(j)$ for
\grmb, where a change-point is present at $n \gamma$.
 First we use Lemma \ref{lem:martprop} to deduce that:
\begin{proposition} \label{prop:zmartrep}
The process $\ZZZ{LR}{j}$ defined by
\begin{eqnarray}
 \ZZZ{LR}{j} & = & \left\{ \begin{array}{ll}
\left( \frac{n-j}{n \delta_L -j} \right) \left(
\psi_{LR}(j) - \mean{LR}{1}(j) \right) & \mbox{ for $0 \leq j
\leq n \gamma -1$,} \\
\left(
\psi_{LR}(j) - \mean{LR}{2}(j) \right) & \mbox{ for $n
\gamma-1 \leq j \leq n -1$,} \\
\end{array} 
\right.  \end{eqnarray}
is a martingale. Here mean functions
\begin{eqnarray} 
\mean{LR}{1}(j) & = &
- \frac{j^2}{n(n-j)} \left( \frac{(1-\gamma)(1-\alpha_L)}
{\delta_L} \right),  \label{eq:prez1} \\
\mean{LR}{2}(j) & = & 
 \left( \frac{\gamma \alpha_L}{\delta_L}
- \frac{\gamma}{\delta_R} + \frac{j}{n} \left( \frac{\gamma(1-
\alpha_R)}{\delta_R} 
\right) \right). \label{eq:prez2}
\end{eqnarray}
Further $\var \ZZZ{LR}{j}$ equals
\begin{eqnarray} 
 \frac{j^2}{n^2 \delta_L^2 (n \delta_L - j)} & & 
\mbox{ for $0 \leq j \leq n \gamma -1$,} \label{eq:zzlrvar1} \\
 \frac{ \alpha_L \gamma ( \alpha_L j + \gamma(1-\alpha_L) 
n)}{ \delta_L^2 n (n-j)}
+ \frac{ (j - \gamma n)(j - 
(1-\alpha_R) \gamma n)}{\delta_R^2 n^2(n-j)}
 &  & \mbox{ for $n \gamma-1 \leq j \leq n -1$.}  \hspace*{0.6cm}
\label{eq:zzlrvar2} 
\end{eqnarray}
\end{proposition}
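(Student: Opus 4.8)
The plan is to follow the proof of Theorem~\ref{thm:main} and track the crossing count $\clr{j}$ directly as a birth-and-death process in $j$, but now with parameters that jump at the change-point $n\gamma$, since under \grmb\ the two blocks of origins $k$ and of targets carry different sampling weights.

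First I would treat $j$ to the left of the change-point, $0 \leq j \leq n\gamma$. Here every origin $k < j$ lies in the first block, so $\pr(T_k^n \geq j) = (n\delta_L - j)/(n\delta_L)$, giving $\pr(T_k^n = j \mid T_k^n \geq j) = 1/(n\delta_L - j)$, while the new link from $k=j$ crosses $j+1$ with probability $(n\delta_L - j - 1)/(n\delta_L)$. Hence $\clr{j}$ is distributed exactly as the process $Y_j$ of Lemma~\ref{lem:martprop} with $N = n\delta_L$ (which that lemma permits to be non-integer) and $\beta = 1$. Part~1 of the lemma gives $\clr{j} \sim \bino{j}{(n\delta_L - j)/(n\delta_L)}$; simplifying $\ep \psi_{LR}(j) = \ep\clr{j}/(n-j) - j/n$ using $\delta_L - 1 = -(1-\gamma)(1-\alpha_L)$ produces $\mean{LR}{1}(j)$, and dividing $\var\clr{j}$ by $(n\delta_L - j)^2$ produces (\ref{eq:zzlrvar1}). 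Part~2 gives that $\clr{j}/(n\delta_L - j) - j/(n\delta_L)$ is a martingale, and this is exactly $\ZZZ{LR}{j}$ on this range, since $\frac{n-j}{n\delta_L - j}\bigl(\psi_{LR}(j) - \mean{LR}{1}(j)\bigr) = (\clr{j} - \ep\clr{j})/(n\delta_L - j)$.

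To the right, $j \geq n\gamma$, I would split $\clr{j} = \tilde A_j + \tilde B_j$ over the two disjoint, hence independent, families of origins, where $\tilde A_j = \#\{k < n\gamma : T_k^n \geq j\}$ and $\tilde B_j = \#\{n\gamma \leq k < j : T_k^n \geq j\}$. No first-block origin is added once $j \geq n\gamma$, so $\tilde A_j$ is a pure-death chain with $\ep[\tilde A_{j+1} \mid \tilde A_j] = \frac{n-j-1}{n-j}\tilde A_j$; thus $\tilde A_j/(n-j)$ is a martingale and $\tilde A_j \sim \bino{n\gamma}{(n-j)\alpha_L/(n\delta_L)}$. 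The process $\tilde B_j$ is $0$ at $j = n\gamma$ and, after shifting time by $n\gamma$, is the process $Y$ of Lemma~\ref{lem:martprop} with $N = n(1-\gamma)$ and $\beta = (1-\gamma)/\delta_R$, so $\tilde B_j \sim \bino{j - n\gamma}{(n-j)/(n\delta_R)}$ and $\tilde B_j/(n-j) - (j-n\gamma)/(n\delta_R)$ is a martingale. Adding the two martingales gives that $\clr{j}/(n-j) - (j-n\gamma)/(n\delta_R)$ is a martingale; subtracting the constant $\gamma\alpha_L/\delta_L$ and using $1 - \delta_R = \gamma(1-\alpha_R)$ rewrites it as $\psi_{LR}(j) - \mean{LR}{2}(j) = \ZZZ{LR}{j}$, which is therefore a martingale on this range, with $\mean{LR}{2}(j) = \ep\psi_{LR}(j)$. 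The variance (\ref{eq:zzlrvar2}) then follows from $\var\clr{j} = \var\tilde A_j + \var\tilde B_j$ together with $n\delta_L - (n-j)\alpha_L = \alpha_L j + \gamma(1-\alpha_L)n$ and $n\delta_R - (n-j) = j - (1-\alpha_R)\gamma n$. Finally I would check that the two pieces fit together at the change-point --- the first-block recursion with $N = n\delta_L$ remains valid across the step $n\gamma - 1 \to n\gamma$, and $\mean{LR}{1}$, $\mean{LR}{2}$ agree at the join --- so that the crossings process is controlled by a martingale on each side of $n\gamma$, which is what the subsequent argument requires.

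The main obstacle is the right-hand analysis: once $j \geq n\gamma$ the single count $\clr{j}$ is no longer Markov, because the two types of crossing link thin at different rates, so one has to carry the pair $(\tilde A_j, \tilde B_j)$ and recognise it as two independent processes of genuinely different kinds --- a pure death and a time-shifted $INAR(1)$ with its own $N$ and $\beta = (1-\gamma)/\delta_R$. Getting these parameters right (especially the time shift and the value of $\beta$ for $\tilde B_j$) and reassembling the pieces cleanly through the join at $n\gamma$ is the delicate part; once the two $\bino{\cdot}{\cdot}$ laws are identified, the mean and variance identities are routine algebra.
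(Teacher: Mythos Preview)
Your proof is correct and follows essentially the same route as the paper: apply Lemma~\ref{lem:martprop} with $N=n\delta_L$, $\beta=1$ on the left, and on the right split $\clr{j}$ into the pure-death piece from first-block origins and the time-shifted $INAR(1)$ piece from second-block origins with $N=n(1-\gamma)$, $\beta=(1-\gamma)/\delta_R$. One small slip in your commentary: for $j\geq n\gamma$ the two types of link actually thin at the \emph{same} rate $1/(n-j)$, so $\clr{j}$ is still Markov there; the reason the split is needed is not different thinning rates but the non-zero initial condition $\clr{n\gamma}\sim\bino{n\gamma}{(1-\gamma)\alpha_L/\delta_L}$, which prevents a direct application of Lemma~\ref{lem:martprop} to the sum.
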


\begin{proof} The key is to observe that, under \grmb,
for $k \leq n \gamma-1$:
\begin{equation}
 \pr(T_k^n = l | T_k^n \geq l) = \left\{  \begin{array}{ll}
\frac{1}{n  \delta_L-l} & \mbox{ for $0 \leq l \leq n \gamma -1$,} \\
\frac{1}{n -l} & \mbox{ for $n \gamma \leq l \leq n-1$,} \\
\end{array} 
\right.  \end{equation}
and for $k \geq n \gamma$, the $\pr(T_k^n = l | T_k^n \geq l) = 1/(n-l)$
for $l \geq n \gamma$. This means that
\begin{enumerate}
\item For $0 \leq j \leq n \gamma-1$, the
$\diy \clr{j+1} \sim \thinning{\frac{n  \delta_L-j-1}{n  \delta_L-j}} 
\clr{j} 
+ \bern{\frac{n  \delta_L-j-1}{n  \delta_L}}$.
We deduce that $\ZZZ{LR}{j}$ is a martingale in this range
and that $\clr{j} \sim \bino{j}{\frac{n  \delta_L-j}{n  \delta_L}}$
by applying Lemma \ref{lem:martprop} with $N=n  \delta_L$ and $\beta=1$.
We deduce the variance of $\ZZZ{LR}{j}$ since
$\var \ZZZ{LR}{j} = \frac{1}{(n \delta_L - j)^2} \var \clr{j}$.

\item For $n \gamma \leq j \leq n-1$,
we 
divide $\clr{j} = \clra{j}{1} + \clra{j}{2}$, where $\clra{j}{1} = \# \{ k < 
\min(j,n  \gamma): T_k \geq j \}$ and $\clra{j}{2} = \# \{ n  \gamma \leq k < j: T_k \geq j \}$. As before
\begin{enumerate}
\item $\diy \clra{j+1}{1}  \sim \thinning{\frac{n-j-1}{n-j}} \clra{j}{1}$.
In this case, since
$$ \ep [\clra{j+1}{1} | \clra{j}{1} = m] = \frac{m (n-j-1)}{(n-j)},$$
we can divide by $n-j-1$ to deduce that 
$ \diy \clra{j}{1}/(n-j)$ is a martingale.
Further, $\clra{j}{1} \sim \bino{n \gamma}{\frac{(n-j) \alpha_L}{n  \delta_L}}$.

\item $\diy \clra{j+1}{2}  \sim \thinning{\frac{n-j-1}{n-j}} \clra{j}{2} + \bern{ 
\frac{n-j-1}{n  \delta_R}}$.
 In this case, by considering $Y_s = \clra{n \gamma + s}{2}$ (since
if $j = s+n \gamma$ then $n-j = n(1-\gamma)-s$) we can write
$ \diy Y_{s+1} \sim \thinning{\frac{n(1-\gamma) - s-1}{n(1-\gamma)-s}} 
Y_s + \bern{ \frac{n(1-\gamma)-s-1}{n  \delta_R}}$. This
means we can
apply Lemma \ref{lem:martprop} with $N=n(1-\gamma)$ and $\beta = (1-\gamma)/  \delta_R$,
to deduce that 
$ \diy \frac{Y_s}{n(1-\gamma)-s} - 
\frac{s}{n  \delta_R} = \frac{ \clra{j}{2}}{n-j} - \frac{j-n \gamma}
{n \delta_R}$ is a martingale. As before
$\clra{j}{2} \sim \bino{j-n \gamma}{\frac{n-j}{n  \delta_R}}$.
\end{enumerate}

The fact that $\ZZZ{LR}{j}$ is a martingale
follows since the sum of two independent martingales is 
a martingale.
We deduce the mean and variance of $\ZZZ{LR}{j}$ since
$\diy
\var (\ZZZ{LR}{j} ) = \frac{1}{(n-j)^2} \left(
\var \clra{j}{1} + \var \clra{j}{2} \right)$.
\end{enumerate}
\end{proof}

Using this martingale characterization, and Doob's submartingale
inequality Equation
(\ref{eq:doob}), we can control $Z_{LR}$ uniformly, as before.
This allows us to control $\psi_{LR}$, as illustrated in Figure 
\ref{fig:envelopechange}.
Essentially, the confidence regions for $\psi_{LR}(j)$
are tilted versions
of the confidence region of Theorem \ref{thm:main}. This means
that the $\psi_{LR}(j)$ stay close to their mean functions for $j \leq n(1-\epsilon)$,
so that the minimum of $\psi_{LR}(j)$ must be close to the minimum of the mean
functions, namely $n \gamma$. This is illustrated in Figure \ref{fig:envelopechange}.
\begin{figure}[!htbp]
\centering
 \includegraphics[height=5in, viewport = 250 100 250 750]{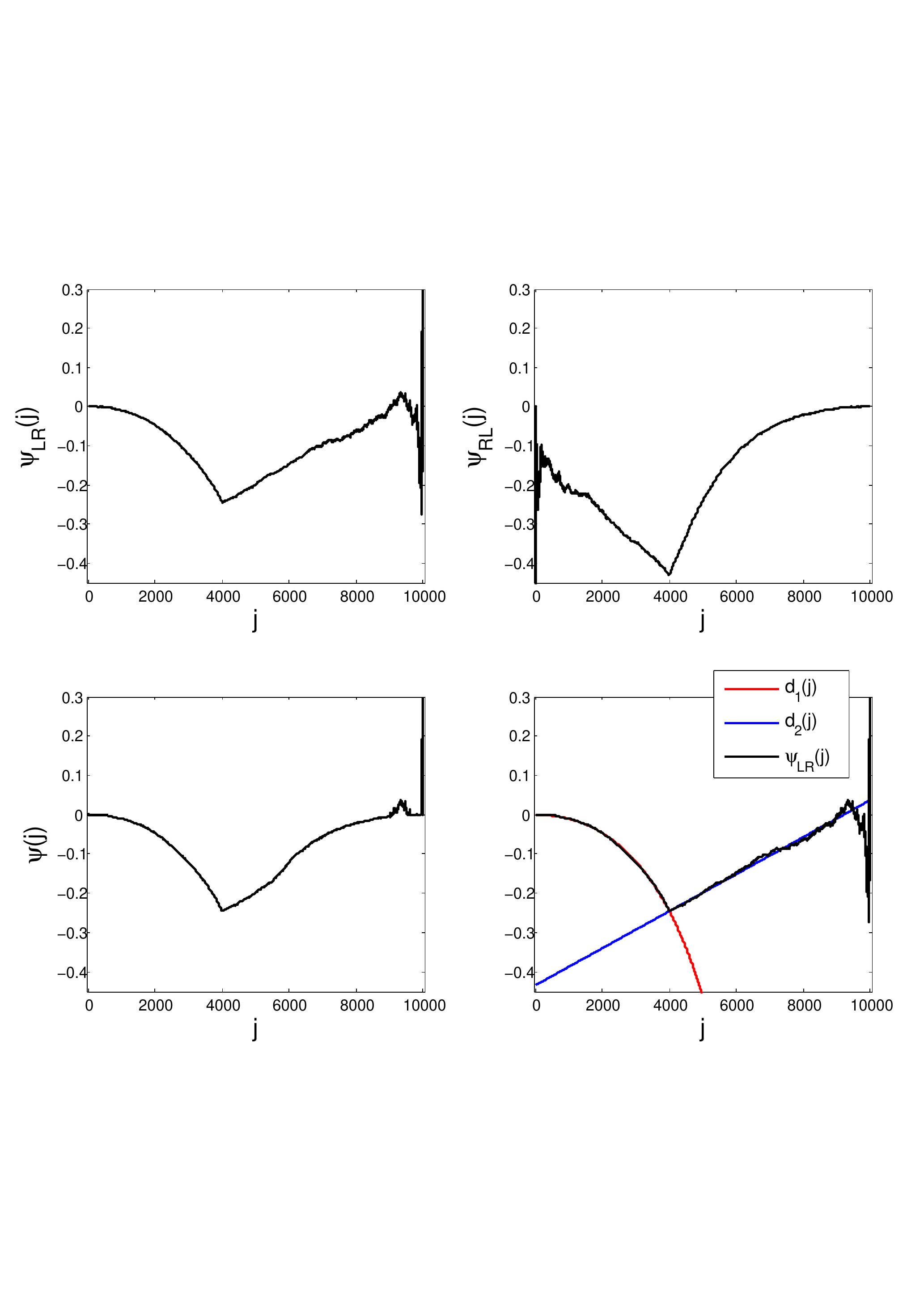}
\caption{
\label{fig:envelopechange} 
Values of (a)
$\psi_{LR}(j)$ (b) $\psi_{RL}(j)$ and (c) $\psi(j) =
\max(\psi_{LR}(j), \psi_{RL}(j))$. Data is generated
under \grmb,
with a change-point at $n \gamma=4000$. In this example,
$n=10000$, $\alpha_L = \alpha_R = 0.2$ and $\gamma = 2/5$. 
The function $\psi_{LR}(j)$
stays close to the mean functions $\mean{LR}{1}$ and $\mean{LR}{2}$  except  when
$j \geq 0.9n$, as shown in (d).}
\end{figure}

\begin{remark} 
By symmetry, the process $\ZZZ{RL}{j}$ defined by
\begin{eqnarray}
 \ZZZ{RL}{j} & = & \left\{ \begin{array}{ll}
 \left(
\psi_{RL}(j) - \mean{RL}{1}(j) \right) & \mbox{ for $0 \leq j
\leq n \gamma -1$,} \\
\left( \frac{j}{j -n \gamma(1-\alpha_L)} \right)\left(
\psi_{RL}(j) - \mean{RL}{2}(j) \right) & \mbox{ for $n
\gamma-1 \leq j \leq n -1$,} \\
\end{array} 
\right.  \end{eqnarray}
is a time-reversed martingale. Here we write
\begin{eqnarray}
\mean{RL}{1}(j) 
& = & \left( \frac{(1-\gamma) \alpha_R}{\delta_R}
- \frac{1-\gamma}{\delta_L} + \frac{(n-j)}{n} \left( \frac{(1-\gamma)(1-
\alpha_L)}{\delta_L} 
\right) \right), \label{eq:prez3} \\ 
\mean{RL}{2}(j)
 & = &  - \frac{(n-j)^2}{n j} \left( \frac{\gamma (1-\alpha_R)}
{\delta_R} \right). 
\label{eq:prez4}
\end{eqnarray}
For $n \gamma \leq j \leq n -1$ the corresponding
$\crl{j} \sim \bino{n-j}{\frac{n  \delta_R-(n-j)}{n  \delta_R}}$, the
\begin{equation} \label{eq:zzrlvar}
\var \ZZZ{RL}{j} = \frac{1}{(j-n \gamma (1-\alpha_R))^2} 
 \var \crl{j}
= \frac{(n-j)^2}{n^2 \delta_R^2 (j-n \gamma(1- \alpha_R))}.
\end{equation}
\end{remark}

\begin{remark} \label{rem:shape}
Note that the form of $\mean{LR}{i}$ and $\mean{RL}{i}$ helps explain the form of the process
$\psi(j)$ seen in Figures \ref{fig:change}
and \ref{fig:envelopechange}. That is, Equations (\ref{eq:prez1})
and (\ref{eq:prez2}) show that the mean of $\psi_{LR}(j)$ is made up of 
a concave part  left of the change-point
and a linear part right of the change-point. Similarly
by Equations (\ref{eq:prez3}) and (\ref{eq:prez4}), the mean of $\psi_{RL}(j)$
will have a linear part left of the change-point and a concave part 
right of the change-point.

In Figures \ref{fig:change} and \ref{fig:envelopechange}
we see that $\psi(j)$ remains close to
the maximum of these two curves; first the concave
$\mean{LR}{1}$ before the change-point, then the linear $\mean{LR}{2}$, followed by the concave $\mean{RL}{2}$. The exact values of 
$\gamma$, $\alpha_L$ and $\alpha_R$ will determine which curve is 
largest at a particular point.
\end{remark}

Notice that the curve $\meanone{LR}(j)$ made up of $\mean{LR}{1}(j)$ for $j \leq  n \gamma-1$
and $\mean{LR}{2}(j)$ for $j \geq n \gamma$ is minimised at $j= n\gamma$ 
with value
$\meanmin{LR} = \mean{LR}{1}(n \gamma)  = \mean{LR}{2}(n \gamma) = - \gamma^2 (1-\alpha_L)/\delta_L$.
Similarly $\meanone{RL}(j)$ is minimised at $j=n \gamma$
with value $\meanmin{RL} =
 - (1-\gamma)^2 (1-\alpha_R)/\delta_R$. 

In the proof of Theorem \ref{thm:main1} we need
to distinguish two cases, according to which of $\meanmin{LR}$ and $\meanmin{RL}$ is smaller.
We briefly remark that in the symmetric case $\alpha_L = \alpha_R$, that
$\meanmin{LR} \leq \meanmin{RL}$ if and only if $\gamma \geq 1/2$.
Further, in the limiting case $\alpha_L = \alpha_R = 0$, the two curves $\meanone{LR}$
and $\meanone{RL}$ intersect at $j = n/2$.

\begin{proof}[Proof of Theorem \ref{thm:main1}]
Without loss of generality, we will assume that $\meanmin{LR} \geq \meanmin{RL}$,
and pick $\epsilon$. Further we assume $\meanmin{LR} < 0$, which is true
if $\alpha_L < 1$. 

First, we observe that the curve $\psi$ cannot be minimised too close
 to either end of the interval of interest.
We write $\epsilon^* = - \meanmin{LR} - \epsilon$.
Recall that (see Figure \ref{fig:envelope}) $\psi(j) \geq \psi_{LR}(j) \geq -j/n$
and $\psi(j) \geq \psi_{RL}(j) \geq -(n-j)/n$. This means that
for $j < n \epsilon^*$ we know that $\psi_{LR}(j) > \meanmin{LR}+
\epsilon$, and for $j > n(1- \epsilon^*)$
we know that $\psi_{LR}(j) > \meanmin{LR}+
\epsilon$.


This means that we can use the union bound and standard conditioning arguments
to decompose the error probability into three terms:
\begin{eqnarray}
\lefteqn{ \pr \left( \left| \frac{1}{n} \argmin_{j} 
\psi(j) - \gamma \right| \geq \frac{s}{\sqrt{n}} \right) } \nonumber \\
& \leq & \pr( \psi(n \gamma) > \meanmin{LR} + \epsilon)
+ \pr \left( \left. \min_{j: |j - n \gamma| \geq s \sqrt{n}} \psi(j) \leq \psi(n \gamma) 
\right| \psi(n \gamma) \leq \meanmin{LR} + \epsilon \right) \nonumber \\
& \leq & \pr( \psi(n \gamma) > \meanmin{LR} + \epsilon) \label{eq:A} \\
&  & + \pr \left( \min_{n \epsilon^* \leq j \leq n \gamma- s \sqrt{n}} 
\psi_{LR}(j) \leq \meanmin{LR} + \epsilon \right) \label{eq:B} \\
& &  + \pr \left( \min_{n \gamma + s \sqrt{n} \leq j \leq n(1- \epsilon^*)} 
\psi_{LR}(j) \leq \meanmin{LR} + \epsilon \right), \label{eq:C} 
\end{eqnarray}
using the fact that $\psi(j) = \max( \psi_{LR}(j), \psi_{RL}(j))$.
We can bound each of these terms in order.
\begin{enumerate}
\item  Observe that by the union bound 
and the form of the mean functions in
 Equations (\ref{eq:prez2}) and (\ref{eq:prez4}), we can bound
(\ref{eq:A}) by
\begin{eqnarray}
\pr( \psi(n \gamma) > \meanmin{LR} + \epsilon) 
& \leq & \pr( \psi_{LR}(n \gamma) > \meanmin{LR} + \epsilon)
+ \pr( \psi_{RL}(n \gamma) > \meanmin{RL} + \epsilon) \nonumber \\
& = & \pr( \ZZZ{LR}{n \gamma}  > \epsilon)
+ \pr( \alpha_R \ZZZ{RL}{n \gamma} > \epsilon) \nonumber \\
& = & \frac{\gamma^2 \alpha_L}
{\delta^2_L (1-\gamma) n \epsilon^2} +
\frac{(1-\gamma)^2 \alpha_R}
{\delta^2_R \gamma n \epsilon^2} \nonumber \\
& \leq & \frac{1}{n \epsilon^2} \left( \frac{\alpha_L}{1-\gamma} + 
\frac{\alpha_R}{\gamma} \right). \label{eq:Abound}
\end{eqnarray}
since by Equation (\ref{eq:zzlrvar1}) the
$\var(\ZZZ{LR}{n \gamma})   = \frac{\gamma^2 \alpha_L}
{\delta^2_L (1-\gamma) n}$, and by Equation
(\ref{eq:zzrlvar}) the
$\var(\ZZZ{RL}{n \gamma}) = \frac{(1-\gamma)^2}{n \gamma \alpha_R \delta_R}$.

\item To bound (\ref{eq:B}),
the key is to observe that the mean term $\mean{LR}{1}$ 
defined in Equation (\ref{eq:prez1})
is a concave function. This means that
for $t \geq 0$ we know that
\begin{eqnarray}
\mean{LR}{1}(n \gamma - t) - \meanmin{LR} & \geq & -
\frac{t \mean{LR}{1}(n \gamma)}{n \gamma}  = \frac{t \gamma(1-\alpha_L)}{n \delta_L},
 \label{eq:slope1} 
\end{eqnarray}
As defined in Proposition \ref{prop:zmartrep}, $\psi_{LR}(j) - \meanmin{LR}$ 
is a multiple of $\ZZZ{LR}{j}$ with a coefficient which  
 decreases in $j$, so
for $n \epsilon^* \leq j \leq n \gamma$, we can bound it by
$ \diy \frac{\gamma \alpha_L + \delta_L \epsilon}{\gamma(1-\gamma)}  \geq
\frac{n  \delta_L - j}{n-j} \geq \alpha_L.$
This means that by Equations (\ref{eq:prez1}) and (\ref{eq:slope1})
\begin{eqnarray}
\lefteqn{ \pr \left( \min_{n \epsilon^* \leq j \leq n \gamma- s \sqrt{n}} 
\psi_{LR}(j) \leq \meanmin{LR} + \epsilon \right) } \nonumber \\
& \leq & \pr \left( 
\left( \mean{LR}{1}(n \gamma - s \sqrt{n}) - \meanmin{LR} \right)
- \delta_L \left(
\sup_{n \epsilon^* \leq j \leq n \gamma - s \sqrt{n}} |\ZZZ{LR}{j}| \right)
\leq \epsilon \right) \nonumber \\
& = & \pr \left( \frac{s \gamma (1-\alpha_L)}{\delta_L \sqrt{n}} - 
\epsilon \leq  \frac{\gamma \alpha_L + \delta_L \epsilon}{\gamma(1-\gamma)} \left(
\sup_{0 \leq j \leq n \gamma} |\ZZZ{LR}{j}| \right) \right) \nonumber \\
& \leq & \left( \frac{\gamma \alpha_L + \delta_L \epsilon}{\gamma(1-\gamma)} 
\right)^2 \frac{ \var( \ZZZ{LR}{n \gamma} )}{
\left( \frac{s \gamma (1-\alpha_L)}{\delta_L \sqrt{n}} - 
\epsilon \right)^2 } \nonumber \\
& = & \frac{ \left( \gamma \alpha_L + \delta_L \epsilon \right)^2}
{\alpha_L  (1-\gamma)^3
\left( s \gamma (1-\alpha_L) - \epsilon \delta_L \sqrt{n}   \right)^2 },
\label{eq:Bbound}
\end{eqnarray}
by Doob's inequality (\ref{eq:doob}) and the variance 
expression (\ref{eq:zzlrvar1}). 

\item
Similarly, using
Equation (\ref{eq:prez2}), we know that
\begin{equation}
\mean{LR}{2}(n \gamma + t) - \meanmin{LR}  = 
\frac{t \gamma(1-\alpha_R)}
{n \delta_R}, \label{eq:slope2}
\end{equation}
meaning that
\begin{eqnarray}
\lefteqn{ \pr \left( \min_{n \gamma + s \sqrt{n} \leq j \leq n(1- \epsilon^*)} 
\psi_{LR}(j) \leq \meanmin{LR} + \epsilon \right) } \nonumber \\
& \leq & \pr \left( 
\left( \mean{LR}{1}(n \gamma + s \sqrt{n}) - \meanmin{LR} \right)
- \left(
\sup_{n \gamma + s \sqrt{n} \leq j \leq n (1-\epsilon^*)} |\ZZZ{LR}{j}| \right)
\leq \epsilon \right) \nonumber \\
& \leq & \frac{ \var( \ZZZ{LR}{n (1-\epsilon^*)})}{
\left(  \frac{s \gamma (1-\alpha_R)}{\delta_R \sqrt{n}}
- \epsilon \right)^2} \nonumber \\
& = & 
\frac{\gamma + \alpha_L}{ \epsilon^* \delta_L
\left(  \frac{s \gamma (1-\alpha_R)}{\delta_R }
- \epsilon \sqrt{n} \right)^2}, \label{eq:Cbound}
\end{eqnarray}
since (\ref{eq:zzlrvar2}) implies that
\begin{eqnarray*}
\lefteqn{
\var \left( \ZZZ{LR}{n (1-\epsilon^*) } \right) } \\
& = & \frac{1}{n \epsilon^*} \left( \frac{\alpha_L \gamma 
(\alpha_L (1-\epsilon^*) + \gamma(1-\alpha_L))}
{\delta_L^2} + 
\frac{(1-\epsilon^* - \gamma)( 1-\epsilon^* - \gamma (1-\alpha_R))}{\delta_R^2}
\right) \\
& \leq & \frac{1}{n \epsilon^*} \left( \frac{\alpha_L \gamma}{\delta_L}  +1
\right) = \frac{ \gamma + \alpha_L}{n \epsilon^* \delta_L}
\end{eqnarray*}
\end{enumerate}
The result follows on adding together the contributions from
Equations (\ref{eq:Abound}), (\ref{eq:Bbound}) and (\ref{eq:Cbound}). We can
choose for example $\epsilon = \gamma^3 (1-\alpha_L) s/(\delta_L \sqrt{n})$,
since $s/\sqrt{n} \leq (1-\gamma)$,  since the assumption that 
$\meanmin{LR} \geq \meanmin{RL}$ ensures that 
$\epsilon \leq \gamma(1-\gamma)^2 (1-\alpha_R) s/\delta_R \sqrt{n}$.
Putting these terms together, we deduce that we can take 
\begin{eqnarray}
K & = &
\left( \frac{\alpha_L}{1-\gamma} + \frac{\alpha_R}{1-\gamma} \right)
\frac{\delta_L^2}{\gamma^6 (1-\alpha_L)^2}
+ \frac{  (\alpha_L + \gamma^2(1-\alpha_L)(1-\gamma))^2}
{\alpha_L (1-\gamma^2)^2(1-\gamma)^3(1-\alpha_L)^2}  \nonumber \\
 & & + \frac{ (\gamma+ \alpha_L)}{\gamma^2(1-\alpha_L)(1-\gamma(1-\gamma))}
\frac{\delta_R^2}{(\gamma^2(1-\alpha_R)^2 (1-(1-\gamma)^2)^2}.
\label{eq:kbound} \end{eqnarray}
\end{proof}

\begin{remark} \label{rem:faster}
Note that the form of (\ref{eq:kbound}) suggests that 
as $\alpha_L$ tends to zero, then $K$ will tend to infinity, meaning
that this is the hardest case. Of course,
the case $\alpha_L = \alpha_R = 0$ will have no crossings of $n \gamma$,
so should be the easiest case. 
We can indeed do much better
by adapting the argument slightly.  Without loss of generality assume that $\gamma \leq 1/2$,
and recall that
in this case $\meanmin{LR} = -\gamma$, 
and we can choose $\epsilon = 0$, so that $\epsilon^* = \gamma$.
This means that Equations (\ref{eq:A}) and (\ref{eq:B}) are zero,
since $\var \ZZZ{LR}{n \gamma} = 0$, and since the interval $[n \epsilon^*, n \gamma-s
\sqrt{n}]$ is empty.
Then taking $\alpha_L = \alpha_R$ in Equation (\ref{eq:Cbound}) gives
$ \diy \frac{ (1-2 \gamma)^2}{s \gamma^3}$. Overall, this means that
$$ \pr \left( |\wgn - \gamma| \geq \frac{s}{\sqrt{n}} \right)
\leq \frac{ (1-2 \gamma)^2}{s \gamma^3},$$
suggesting that the estimator is $\sqrt{n}$-consistent in this case. 

In fact, we can do better. Since the
interval $[n \epsilon^*, n \gamma-1]$ is empty,
we can strengthen the bound on (\ref{eq:B}) to
deduce that $\pr \left( \min_{n \epsilon^* \leq j \leq n \gamma- 1} 
\psi_{LR}(j) \leq \meanmin{LR} + \epsilon \right)= 0$.
Further notice that when $\gamma = 1/2$, the
$\pr(\wgn \neq \gamma) = 0$, since the interval  
$[n \gamma + 1 \leq j \leq n (1-\epsilon^*)]$ is again empty.

Otherwise, we divide the interval into further subintervals, using
a similar argument to that used to obtain (\ref{eq:Cbound}). 
Since Equation (\ref{eq:zzlrvar2}) gives 
$\diy \var \ZZZ{LR}{b} = \frac{(b-\gamma n)^2}{(1-\gamma)^2 n^2 (n-b)}$,
for any $n \gamma \leq a \leq b$ we know that
\begin{eqnarray}
\pr \left( \min_{a \leq j \leq b} 
\psi_{LR}(j) \leq \meanmin{LR}  \right)
& \leq & \pr \left( 
\left( \mean{LR}{1}(a) - \meanmin{LR} \right)
\leq 
\sup_{a \leq j \leq b} |\ZZZ{LR}{j}|  \right) \nonumber \\
& = &  \pr \left( 
\frac{(a-\gamma n) \gamma}{n(1-\gamma)}
\leq 
\sup_{a \leq j \leq b} |\ZZZ{LR}{j}| \right) \nonumber \\
& \leq & \left( \frac{ n(1-\gamma)}{(a-\gamma n) \gamma} \right)^2 
\var( \ZZZ{LR}{b} ) \nonumber \\
& = & \frac{ (b - \gamma n)^2}{(n-b) \gamma^2 (a - \gamma n)^2}.
\label{eq:tounion}
\end{eqnarray}
This means that we can pick a constant $C > 1$, and divide the interval
$[n \gamma + 1, n(1-\gamma)]$ into subintervals
$[a_k,b_k]$, where $a_k = n \gamma + C^k$ and $b_k = \min \left( n \gamma + C^{k+1},
n(1-\gamma) \right)$,
where $k = 0, \ldots, K-1$, with $K = \log( n(1-2 \gamma))/\log C$. 
Applying the
union bound to these intervals, we deduce by Equation (\ref{eq:tounion})  that 
\begin{equation}
 \pr(\wgn \neq \gamma) \leq \frac{C^2}{\gamma^2} \frac{K}{n},\end{equation}
or in other words that the probability that the estimator makes a mistake
is $O((\log n)/n)$.
Up to the factor of $\log n$, this probability is of optimal order, since 
for $\gamma < 1/2$ independence implies that
\begin{eqnarray*}
\lefteqn{ \liminf_{n \rightarrow \infty} n \pr( \wgn \neq \gamma) } \\
& \geq & \liminf_{n \rightarrow \infty} n
 \pr \left( \left\{ \psi_{LR}(n \gamma + 1) \leq \meanmin{LR} \right\}
\bigcap \left\{ \psi_{RL}(n \gamma + 1) \leq \meanmin{LR} \right\} \right) \\
& \geq & \liminf_{n \rightarrow \infty} n \pr( \clr{n \gamma + 1} = 0 ) \pr ( 
\crl{n \gamma+1} = 0) \\
& = & \frac{e^{-1}}{(1-\gamma)},
\end{eqnarray*}
as $\clr{n \gamma + 1} \sim \bern{ \frac{n(1-\gamma) - 1}{n(1-\gamma)}}$
and $\crl{n \gamma + 1} \sim \bino{n(1-\gamma) - 1}{\frac{1}{n(1-\gamma)} }
\convd \pois{1}$.
\end{remark}

\section*{Acknowledgements}
This work was funded by a grant from the Ministry of Defence, via
the Underpinning Defence Mathematics programme. We would like to thank
Christophe Andrieu of Bristol University and
Tim Boxer of the Industrial Mathematics KTN for their support and advice.

\bibliography{../../bibliography/papers}

\end{document}